\documentclass[a4paper,12pt]{amsart}
\usepackage{amsfonts,amsmath,amsthm,amssymb}
\usepackage{paralist}
\usepackage{latexsym}
\usepackage{graphics}
\oddsidemargin  0.0in \evensidemargin 0.0in \textwidth      6.5in
\headheight     0.0in \topmargin      0.0in \textheight=9.0in
\newtheorem{theorem}{Theorem}[section]

\newtheorem{lemma}[theorem]{Lemma}

\newtheorem{proposition}[theorem]{Proposition}

\theoremstyle{definition}

%\newcounter{rem}
%\setcounter{rem}{0}
%\newcommand{\rem}[1]{
%    \marginpar{\tiny\refstepcounter{rem}{\small\therem(MK): #1}}}
\theoremstyle{remark}
\newtheorem{rem}[theorem]{Remark}
\theoremstyle{remark}

\newcommand{\beql}[1]{\begin{equation}\label{#1}}
\newcommand{\eeq}{\end{equation}}

\begin{document}
% command for setting BH in roman
\newcommand{\BH}{\mathrm{BH}}

%\begin{document}

\title[The quaternary complex Hadamard matrices of orders $10,12$, and
  $14$]{The quaternary complex Hadamard matrices\newline of orders
  $10,12$, and $14$}

\author{Pekka H.J. Lampio, Ferenc Sz\"oll\H{o}si and Patric R.J. \"Osterg\aa rd}

\date{February 2012, preprint}

\address{P.H.J. L.: Department of Communications and Networking,
  Aalto University School of Electrical Engineering,
  P.O. Box 13000, 00076 Aalto, Finland.
}\email{pekka.lampio@aalto.fi}

\address{F. Sz.: Department of Mathematics and its
  Applications, Central European University, H-1051, N\'ador u. 9,
  Budapest, Hungary}\email{szoferi@gmail.com}

\address{P.R.J. \"O.: Department of Communications and Networking,
  Aalto University School of Electrical Engineering,
  P.O. Box 13000, 00076 Aalto, Finland.
}\email{patric.ostergard@aalto.fi}

\thanks{P.H.J. L. was supported by the Academy of Finland, Grants No. 132122.\\
  F. Sz. was supported by the Hungarian National Research Fund OTKA K-77748.\\
  P.R.J. \"O. was supported by the Academy of Finland, Grants
  No. 130142 and No. 132122.\\}

\begin{abstract}
A complete classification of quaternary complex Hadamard matrices of
orders $10, 12$ and $14$ is given, and a new parametrization scheme
for obtaining new examples of affine parametric families of complex
Hadamard matrices is provided. On the one hand, it is proven that all
$10\times 10$ and $12\times 12$ quaternary complex Hadamard matrices
belong to some parametric family, but on the other hand, it is shown
by exhibiting an isolated $14\times 14$ matrix that there cannot be a
general method for introducing parameters into these types of matrices.
\end{abstract}

\maketitle

%% 2010 Mathematics Subject Classification: 
%%       05B20 - Matrices
%%       15B34 - Boolean and Hadamard matrices
%%       68R05 - Combinatorics
{\bf 2010 Mathematics Subject Classification.} Primary 05B20,
secondary 15B34.

{\bf Keywords and phrases.} {\it Butson-type Hadamard matrix,
  Classification, Complex Hadamard matrix}.

\section{Introduction}

A complex Hadamard matrix $H$ of order $n$ is an $n\times n$ matrix
with unimodular entries satisfying $HH^\ast=nI$ where $\ast$ is the
conjugate transpose, $I$ is the identity matrix of order $n$, and
where unimodular means that the entries are complex numbers that lie
on the unit circle. In other words, any two distinct rows (or columns)
of $H$ are complex orthogonal. Complex Hadamard matrices have various
applications in mathematics ranging from coding \cite{hlt} and
operator theory \cite{popa} to harmonic analysis \cite{MMM}. They also
play a crucial r\^ole in quantum information theory, for construction of
quantum teleportation and dense coding schemes \cite{wer}, and they
are strongly related to mutually unbiased bases (MUBs) \cite{Z}. In
this paper we are primarily concerned with $n\times n$ Butson-type
Hadamard matrices \cite{Bu}, denoted by $\BH(q,n)$, which are complex
Hadamard matrices composed of $q$th roots of unity. The notations
$\BH(2,n)$ and $\BH(4,n)$ correspond to the real Hadamard matrices and
quaternary complex Hadamard matrices, respectively.

Recently there has been a renewed interest in $\BH(q,n)$ matrices. In
particular, they have been used as starting-point matrices for
constructing parametric families of complex Hadamard matrices for
various $q$ and $n$.  In a series of papers Di\c{t}\u{a} \cite{dita3},
Sz\"oll\H{o}si \cite{SZF1}, and Tadej and \.Zyczkowski \cite{karol}
obtained new, previously unknown parametric families of complex
Hadamard matrices from $\BH(q,n)$ matrices. The parametrization of a
$\BH(q,n)$ matrix is an operation where some of the matrix entries are
replaced by infinitely differentiable, or smooth, functions mapping a
real-valued or complex-valued argument vector into the set of
unimodular complex numbers. Any assignment of values to the parameters
yields a complex Hadamard matrix and some assignments produce
$\BH(q,n)$ matrices. The parametrization of the $\BH(q,n)$ matrices
makes it possible to escape equivalence classes and therefore to
collect many inequivalent matrices into a single parametric family.
Also the switching operation \cite{WO}, a well-known technique in
design theory, has this property.

The existence of $\BH(q,n)$ matrices is wide open in general. Even the
simplest case for $q=2$ is undecided, as the famous Hadamard
conjecture, stating that $\BH(2,4k)$ matrices exist for every positive
integer $k$, remains elusive despite continuous efforts
\cite{CHK}. Real Hadamard matrices, or $\BH(2,n)$ matrices, are
completely classified up to $n=28$, and there has been some promising
advance in enumerating the case $n=32$ very recently (see \cite{HK}
and the references therein). For other values of $q$ we have some
constructions \cite{Bu} and some non-existence results
\cite{winterhof}. Harada, Lam and Tonchev classified all $16\times 16$
generalized Hadamard matrices over groups of order $4$ and obtained
new examples of $\BH(4,16)$ matrices \cite{hlt}. This particular
result motivated the authors to investigate the existence of
$\BH(4,n)$ matrices in more general and to start enumerating and
classifying them for small $n$. The census of quaternary complex
Hadamard matrices up to order $8$ were carried out in \cite{SZF4}. The
aim of this paper is to continue that work, and give a complete
classification of the $\BH(4,n)$ matrices up to orders $n=14$.

This work has two major parts: first, we classify all $\BH(q,n)$
matrices using computer-aided methods, and secondly, we define
parametric families of complex Hadamard matrices by introducing
parameters to the $\BH(q,n)$ matrices. Parametric families offer a
compact way of representing a large number of $\BH(q,n)$ matrices, and
they also yield information about complex Hadamard matrices.

In the previous work \cite{SZF4} the authors collected all $\BH(4,n)$
matrices contained in various parametric families from the existing
literature and then confirmed with an exhaustive computer search that
those are the only examples. Here the situation is exactly the
opposite, as it turns out that almost all $\BH(4,n)$ matrices of
orders $n=12$ and $14$ found in the current work by the computer
search are previously unknown. Therefore we are facing the inverse
problem: we need to encode a given collection of $\BH(4,n)$ matrices
by parametric families.

The outline of the paper is as follows. In Section \ref{preliminaries}
we give some basic definitions and results that are used in later
sections.  Then in Section \ref{ch2} we recall the computer-aided
classification method of difference matrices from \cite{lampio} and
highlight the main differences and necessary modifications required
for our purposes. In Section \ref{ch3} we introduce a new method for
parametrizing complex Hadamard matrices and recall some relevant
materials from the existing literature. Finally, in Sections
\ref{ch4}, \ref{ch5}, and \ref{ch6} we present the classification of
$\BH(4,n)$ matrices for $n=10,12$, and $14$, respectively. To improve
the readability of our paper a rather long list of complex Hadamard
matrices has been moved from the main text to Appendix \ref{APPA}. The
matrices the authors obtained are also available in an electronic
format in the on-line repository \cite{web} as a
supplement. Throughout this paper we adopt the notations from
\cite{karol} for known complex Hadamard matrices, such as
$D_{10}^{(3)}(a,b,c)$, etc.

\section{Preliminaries}\label{preliminaries}

Two $\BH(q,n)$ matrices are equivalent if the first matrix can be
transformed into the second one by permuting the order of its rows and
columns and multiplying a row or a column by some $q$th root of
unity. The automorphism group of a $\BH(q,n)$ matrix $H$ is the group
of pairs of monomial matrices $(P,Q)$ such that $H=PHQ$; a monomial
matrix is an $n\times n$ matrix having a single nonzero entry in each
row and column, these nonzero entries being complex $q$th roots of
unity. Note that the automorphism group depends on the choice of $q$.

We employ invariants for finding the equivalence class of a $\BH(q,n)$
matrix in a computationally efficient way. The determinant of a $k
\times k$ submatrix of matrix $H$ is called a vanishing minor, or a
zero minor, of $H$ if the determinant is zero. Throughout this paper
we shall repeatedly use the following

\begin{lemma}
Let $H$ be a complex Hadamard matrix of order $n$ and $2\leq k\leq
n-2$ be an integral number. Then the number of $k\times k$ vanishing
minors of $H$ is invariant, up to equivalence.
\end{lemma}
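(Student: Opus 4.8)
The plan is to show that the count of vanishing $k\times k$ minors is preserved under each of the generating operations of the equivalence relation: permuting rows, permuting columns, multiplying a row by a $q$th root of unity, and multiplying a column by a $q$th root of unity. Since equivalence is generated by these operations, invariance under each generator yields invariance of the count under arbitrary equivalences. So first I would reduce the statement to checking these four elementary moves individually.

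\smallskip

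The key observation is that each of these operations induces a bijection on the set of $k\times k$ submatrices that preserves the property of having a zero determinant. Concretely, consider a fixed $k\times k$ submatrix determined by a choice of row index set $R$ and column index set $C$. A permutation $\sigma$ of the rows of $H$ sends the submatrix indexed by $(R,C)$ to the submatrix indexed by $(\sigma^{-1}(R),C)$, and the two submatrices are identical up to a reordering of their rows; hence one determinant is zero if and only if the other is. The same argument applies to column permutations. For the scaling operations, multiplying row $i$ of $H$ by a unimodular scalar $\lambda$ either leaves a given $k\times k$ submatrix unchanged (if $i\notin R$) or multiplies exactly one of its rows by $\lambda$; in the latter case the determinant is scaled by $\lambda\neq 0$, so it is zero precisely when the original was zero. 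Column scalings behave identically. In every case the map on submatrices is a bijection carrying zero minors to zero minors and nonzero minors to nonzero minors.

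\smallskip

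Having established that each generator induces a determinant-vanishing-preserving bijection on $k\times k$ submatrices, I would conclude that the total number of vanishing $k\times k$ minors is unchanged by each generator, and therefore by any composition of them, i.e.\ by any equivalence transformation. The restriction $2\le k\le n-2$ simply ensures we are counting genuine proper submatrices (and that both $H$ and its relationship to the full determinant are nontrivial), and plays no essential role in the invariance argument itself.

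\smallskip

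I do not anticipate a serious obstacle here; the result is essentially bookkeeping, and the only point requiring care is confirming that the induced maps on the index sets $(R,C)$ are genuine bijections of the set of $k\times k$ submatrices onto itself, so that no minor is double-counted or omitted when the count is compared before and after the transformation. The multiplicative property of the determinant with respect to scaling a single row or column, together with the fact that all scalars used are nonzero roots of unity, makes the vanishing-preservation transparent.
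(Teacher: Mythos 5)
Your proof is correct: equivalence is generated by row/column permutations and unimodular row/column scalings, each of which induces a bijection on the set of $k\times k$ submatrices under which determinants change only by reordering of rows/columns or by multiplication with a nonzero scalar, so the count of vanishing minors is preserved. The paper itself gives no proof of this lemma---it only remarks that the count is a special case of the fingerprint invariant of the cited reference---so your generator-by-generator verification is precisely the routine argument the authors leave implicit; note also that for general complex Hadamard matrices the scalings may be arbitrary unimodular numbers rather than $q$th roots of unity, but your argument covers this since it uses nothing beyond the scalars being nonzero.
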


The number of $k\times k$ vanishing minors is just a special case of a
more powerful invariant, the fingerprint \cite{SZF1}, but it is sufficient
for our purposes. 

The $q$-rank, or $\mathbb{Z}_q$-rank, of an integral matrix $L$ of
order $n$ is the smallest positive integer $r$ such that there are
integral matrices $S$ and $T$ of orders $n\times r$ and $r\times n$
respectively, such that $ST\equiv L$ (mod $q$). The
$\mathbb{Z}_q$-rank of a $\BH(q,n)$ matrix $H$ is the
$\mathbb{Z}_q$-rank of the $(0,1,\hdots, q-1)$-matrix $L$ for which
$H=\mathrm{EXP}\left(\frac{2\pi\mathbf{i}}{q}L\right)$, where
$\mathrm{EXP}$ denotes the entry-wise exponentiation function. It is
easy to see that the $\mathbb{Z}_q$-rank is, again, invariant up to
equivalence. A class of $\BH(q,n)$ matrices with small
$\mathbb{Z}_q$-rank has some interesting applications in harmonic
analysis \cite{MMM}.

As it seems that a complex Hadamard matrix $H$ shares every important
property with its hermitian $H^\ast$, complex conjugate $\overline{H}$
and transpose $H^T$, we introduce the concept of ACT-equivalence.  Two
complex Hadamard matrices $H$ and $K$ are called
$\mathrm{ACT}$-equivalent, if $H$ is equivalent to at least one of
$K$, $K^\ast$, $\overline{K}$ or $K^T$.  The concept of this refined
equivalence simplifies the presentation of our results as we can avoid
unnecessary repetitions in our summarizing tables \cite{web}.

\section{Constructing Butson-type Hadamard matrices}\label{ch2}

The computer-aided methods employed in the classification of
Butson-type Hadamard matrices in this work are very similar to the
methods used for the classification of difference matrices over cyclic
groups \cite{lampio}. Therefore, we give here only a summary of the
relevant ideas and describe in detail only the points where this work
differs from the work done on difference matrices.

We perform an exhaustive computer search of the space of all
Butson-type Hadamard matrices $\BH(q,n)$ with the given parameters
$q=4$ and $n=10,12,14$. Because the size of the search space grows
very quickly as the parameters are increased, we have to prune most of
the search space and we do this by considering only inequivalent
matrices.

An $m \times n$ matrix over $q$th roots of unity is said to be a
candidate $\BH(q,n)$ matrix, denoted by $\BH(q,m,n)$, if the rows are
orthogonal as in a $\BH(q, n)$ matrix, but the number of rows $m \leq
n$. A $\BH(q,m,n)$ matrix is dephased (or normalized) if the first
row and the first column contain only the value $1$. Since every
$\BH(q,m,n)$ matrix is equivalent to a dephased one, only these
matrices are considered in the computer search.

The dephased candidate $\BH(q,m,n)$ matrices are organized into a tree
where the child nodes of a $\BH(q,m,n)$ are all $\BH(q,m+1,n)$ that
are obtained by appending a row to the node,
cf.\ \cite[Definition~2.6]{lampio}. An exhaustive search of this tree
is performed by weak canonical augmentation
\cite[Section~4.2.3]{kaski}, which has the advantage over the simpler
breadth-first search used in \cite{lampio} that it can be performed
easily in parallel.

Consider a tree of dephased candidate $\BH(q,m,n)$ matrices. Let $p(X)$
denote the parent of the node $X$ in the tree. Every node
$X$ in the tree has a finite sequence of ancestors from which it
has been constructed by appending a row to it:
\begin{equation}\label{x_seq}
  X,p(X),p(p(X)),p(p(p(X))), \dots
\end{equation}

In the tree each $X$ occurs only once but there are typically many
nodes $Y$ with $X \cong Y$. Such a node $Y$ has also a sequence of
ancestors from which it has been constructed:
\begin{equation}\label{y_seq}
  Y,p(Y),p(p(Y)),p(p(p(Y))), \dots
\end{equation}

Even though $X \cong Y$ the ancestor sequences (\ref{x_seq}) and
(\ref{y_seq}) need not consist of the same nodes up to equivalence. This
means that the sequences (\ref{x_seq}) and (\ref{y_seq}) can be distinct
on the level of equivalence classes even if $X \cong Y$. The main idea
is now to exploit such differences among equivalent nodes in rejecting
equivalent nodes.

Let $T_{nr}$ denote the set of all non-root nodes in the search
tree $T$. Associate with every object $X \in T_{nr}$ a weak canonical
parent $w(X) \in T$ such that the following property holds:
\begin{equation}\label{weak_parent}
  \mbox{for all} \quad X,Y \in T_{nr} \quad \mbox{it holds that}
  \quad X \cong Y \quad \mbox{implies} \quad w(X) \cong w(Y).
\end{equation}

The function $w$ defines for every non-root node $X$ a sequence of
objects analogous to (\ref{x_seq}):
\begin{equation}\label{w_seq}
  X,w(X),w(w(X)),w(w(w(X))), \dots
\end{equation}

Because of (\ref{weak_parent}), any two equivalent objects have
identical sequences (\ref{w_seq}) on the level of equivalence classes of
matrices. When the search tree is traversed in depth-first order, a
node $X$ and the subtree rooted at it is considered only if it has
been constructed in the canonical way specified by (\ref{w_seq}); that
is, every matrix in the ancestor sequence (\ref{x_seq}) should be
equivalent to the matrix in the corresponding position in the sequence
(\ref{w_seq}). By (\ref{weak_parent}) we obtain
$$ p(X) \cong w(X) \cong w(Y) \cong p(Y) $$

In other words, equivalent matrices generated by weak canonical
augmentation have equivalent parent matrices, and assuming that the
same holds for the parents, this implies that equivalent matrices must
be siblings in the search tree. This reduces the size of the search
tree dramatically.

The problem of checking equivalence of $\BH(q,m,n)$ matrices is solved
by transforming it into a corresponding graph isomorphism problem in
exactly the same way as was done with difference matrices over cyclic
groups in \cite[Section~3]{lampio}.  Each $\BH(q,m,n)$ matrix is
mapped to a directed graph, called the equivalence graph of the
matrix. Two $\BH(q,m,n)$ matrices are equivalent if and only if their
equivalence graphs are isomorphic graphs. The definition of an
equivalence graph is obtained from the definition of a difference
matrix graph \cite[Definition~3.2]{lampio} by considering $q$th roots
of unity as a cyclic group where the operation is complex
multiplication.

The software package \textsc{Nauty} \cite{nauty} is used for checking
the isomorphism of equivalence graphs. For each equivalence graph
\textsc{Nauty} calculates a graph, called the canonical graph, which
has the property that two equivalence graphs are isomorphic if and
only if they have the same canonical graph. This implies that two
$\BH(q,m,n)$ matrices are equivalent if and only if the canonical
graphs of the equivalence graphs of the matrices are the same. We
define the canonical graph of a matrix $X$ as the canonical graph of
the equivalence graph of $X$.

Transforming a matrix to a graph yields also a weak canonical function
$w$ having the property (\ref{weak_parent}). Let $P(X)$ be the set of
matrices obtained by removing a row from a matrix $X \in T_{nr}$ and
let $\leq_g$ denote a total order on the set of canonical graphs. We
define $w(X)$ as the matrix $Y \in P(X)$ which has the smallest
canonical graph under $\leq_g$ among the canonical graphs of matrices
in $P(X)$.

The definition of equivalence of $\BH(q,n)$ matrices given in Section
\ref{preliminaries} agrees with the equivalence relation $\cong^*$
defined in \cite[Section~2]{lampio} and the equivalence inducing group
$E^*$ in \cite[Section~5]{lampio}.

To get confidence in the computational results, we perform a
consistency check by counting the number of candidate and complete
$\BH(q,n)$ matrices in two different ways. In the first method the
sizes of the equivalence classes of $\BH(q,m,n)$ matrices are obtained
from inequivalent $\BH(q,m,n)$ matrices and the automorphism groups of
their canonical graphs via the orbit-stabilizer theorem.  The second
method determines the number of $\BH(q,m,n)$ matrices from the number
of $\BH(q,m - 1,n)$ matrices and number of ways these smaller matrices
can be augmented to yield a $\BH(q,m,n)$ matrix.  This double-counting
method is the same as in \cite[Section~5]{lampio} except that the
analytical formula for $2 \times c$ difference matrices and the
reduction used for $3 \times c$ difference matrices are not valid for
$\BH(q,2,n)$ and $\BH(q,3,n)$ matrices, respectively. With $\BH(q,n)$
matrices the double-counting starts one level earlier with
$\BH(q,1,n)$ matrices, which all belong to the same equivalent class of
size $q^n$.

The exhaustive computer search yields all inequivalent $\BH(q,n)$ matrices and
the automorphism group for each matrix.

\section{Parametrizing complex Hadamard matrices}\label{ch3}

Once a complete set of inequivalent $\BH(4,n)$ matrices were obtained
the authors investigated whether these matrices lead to parametric
families of complex Hadamard matrices. We recall the following basic
method to introduce free parameters into complex Hadamard matrices of
even order \cite[Lemma~3.4]{SZF1}.

\begin{lemma}\label{spl1}
Let $H$ be a dephased complex Hadamard matrix of order $n$ and suppose
that there exist a pair of rows in $H$, say $u$ and $v$, such that for
every $i=1,2,\hdots,n$, $u_i^2=v_i^2$. Then for all such $i$ for which
$u_i+v_i=0$ replace $u_i$ with $\alpha u_i$ and $v_i$ with $\alpha
v_i$, where $\alpha$ is a unimodular complex number to obtain a
one-parameter family of complex Hadamard matrices $H(\alpha)$.
\end{lemma}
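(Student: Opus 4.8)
**The plan is to verify directly that the modified matrix $H(\alpha)$ remains a complex Hadamard matrix for every unimodular $\alpha$.** The entries of $H(\alpha)$ are clearly still unimodular, since we only multiply certain entries by the unimodular scalar $\alpha$. So the entire content of the lemma is to check that all pairs of distinct rows of $H(\alpha)$ remain orthogonal. I would partition the rows into three types: the two distinguished rows $u$ and $v$, and all other rows $w$. This gives three cases to inspect: $(u,v)$, $(u,w)$ together with $(v,w)$, and $(w,w')$ for two other rows. The case $(w,w')$ is immediate, because neither $w$ nor $w'$ is altered.

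**The main case is the inner product $\langle u,v\rangle$.** Let $S=\{i : u_i+v_i=0\}$ be the set of indices where we insert $\alpha$. The key algebraic observation is the hypothesis $u_i^2=v_i^2$, which says that at every coordinate either $u_i=v_i$ or $u_i=-v_i$; the latter is exactly $i\in S$. I would write the original inner product as
\[
\langle u,v\rangle=\sum_{i\notin S} u_i\overline{v_i}+\sum_{i\in S} u_i\overline{v_i}.
\]
After the modification, coordinate $i\in S$ contributes $(\alpha u_i)\overline{(\alpha v_i)}=|\alpha|^2 u_i\overline{v_i}=u_i\overline{v_i}$, since $|\alpha|=1$. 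Thus the inner product $\langle u,v\rangle$ is completely unchanged by the modification, and it was zero to begin with because $H$ is Hadamard. This is the place where unimodularity of $\alpha$ is essential.

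**The remaining case, orthogonality of a modified row with an unmodified row $w$, is where the hypothesis $u_i^2=v_i^2$ really does the work, and I expect this to be the main obstacle to get cleanly.** Here the two products $\langle u,w\rangle$ and $\langle v,w\rangle$ individually change, so one cannot treat them in isolation. The trick I would use is to track the combined change and exploit a pairing of $u$ against $v$. On $S$ we have $u_i=-v_i$, equivalently $v_i=-u_i$; on the complement we have $u_i=v_i$. I would compute the perturbation $\langle \alpha\text{-modified } u, w\rangle-\langle u,w\rangle=(\alpha-1)\sum_{i\in S}u_i\overline{w_i}$, and similarly for $v$, obtaining $(\alpha-1)\sum_{i\in S}v_i\overline{w_i}=-(\alpha-1)\sum_{i\in S}u_i\overline{w_i}$ after substituting $v_i=-u_i$ on $S$. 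These are the changes in two separate inner products, so to finish I would argue that each must already vanish: indeed, orthogonality of the \emph{original} rows gives $\langle u,w\rangle=\langle v,w\rangle=0$, and subtracting these (using $u_i=v_i$ off $S$) forces $\sum_{i\in S}(u_i-v_i)\overline{w_i}=2\sum_{i\in S}u_i\overline{w_i}=0$. Hence the perturbation term $(\alpha-1)\sum_{i\in S}u_i\overline{w_i}$ vanishes, so $\langle u,w\rangle$ remains $0$ after modification, and symmetrically for $v$. Assembling the three cases shows $H(\alpha)H(\alpha)^\ast=nI$ for all unimodular $\alpha$, completing the proof.
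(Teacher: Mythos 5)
Your proof is correct. One point of comparison is moot, however: the paper does not prove Lemma~\ref{spl1} at all, but simply recalls it from the literature (it is Lemma~3.4 of the cited paper of Sz\"oll\H{o}si, \cite{SZF1}), so there is no in-paper proof to measure your argument against. What can be said is that your verification is the natural direct one, and it is structurally the same technique the paper does use when proving its own Theorem~\ref{ch1newp}: there, for a row $[1,z_i,z_i,A_i,B_i]$, the two orthogonality relations \eqref{ch3pars1} and \eqref{ch3pars2} against the distinguished pair of rows are combined to conclude $\left\langle B_i,y\right\rangle=0$, i.e.\ the inner product restricted to the coordinates about to be perturbed already vanishes, so multiplying those coordinates by a unimodular $\alpha$ preserves orthogonality. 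Your Case~2 is exactly this: subtracting $\left\langle u,w\right\rangle=0$ and $\left\langle v,w\right\rangle=0$ kills the terms off $S$ (where $u_i=v_i$) and yields $2\sum_{i\in S}u_i\overline{w_i}=0$, so the perturbation $(\alpha-1)\sum_{i\in S}u_i\overline{w_i}$ is zero; and your Case~1 observation that $\alpha\overline{\alpha}=1$ leaves $\left\langle u,v\right\rangle$ untouched parallels the computation the paper performs for its second and third rows. You also implicitly (and correctly) use that $u_i=v_i$ and $u_i=-v_i$ cannot hold simultaneously, since the entries are unimodular and hence nonzero, which is what makes the partition by $S$ clean. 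In short: the proposal is a complete and correct proof of the statement, filling in a proof the paper itself omits, by the same method the paper applies to its generalization.
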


Lemma \ref{spl1} is a general method for introducing affine parameters
into real Hadamard matrices, irrespectively of their order, and it can
be applied to various other matrices as well. Unfortunately, however,
it cannot be applied to $\BH(4,n)$ matrices when $n\equiv 2$ (mod
$4$). In the following we present a method working for some of these
BH(4,n) matrices.

Let us denote by $1^m$ the all-$1$ row vector of length $m$, and let
$\left\langle .,.\right\rangle$ denote the standard inner product in
$\mathbb{C}^d$ with the convention that it is linear in the first, and
conjugate linear in its second argument.

\begin{theorem}\label{ch1newp}
Let $H$ be a dephased complex Hadamard matrix with the following block structure
\[H=\left[\begin{array}{ccccc}
1 & 1 & 1 & 1^p & 1^q\\
1 & a & b & x & y\\
1 & b & a & x & -y\\
(1^r)^T & z^T & z^T & A & B\\
(1^s)^T & w^T & -w^T & C & D\\
\end{array}\right],\]
where $a$ and $b$ are arbitrary unimodular numbers. Then, after
replacing the row vectors $y$ with $\alpha y$ and $w$ with
$\overline{\alpha} w$ we obtain a one-parameter family of complex
Hadamard matrices $H(\alpha)$ where $\alpha$ is unimodular. If, in
addition, $b=a$ we can continue by replacing $w$ in $H(\alpha)$
with $\alpha\beta w$ to obtain a two-parameter family of complex
Hadamard matrices $H(\alpha,\beta)$ where $\alpha$ and $\beta$ are
unimodular.
\end{theorem}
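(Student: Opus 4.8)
The plan is to verify directly that $H(\alpha)$ (and later $H(\alpha,\beta)$) remains a complex Hadamard matrix, i.e. that its entries stay unimodular and that its rows remain pairwise orthogonal. Unimodularity is immediate, since we only multiply entries by the unimodular scalars $\alpha$ and $\overline\alpha$, and the diagonal norms are preserved because $|\alpha|^2=1$. So everything reduces to re-checking the off-diagonal inner products. Rather than recompute each product from scratch, I would first \emph{extract the compatibility relations} that the original Hadamard property already forces, and then show that these relations are exactly what is needed to absorb the newly introduced factors. Write the second and third rows as $u=(1,a,b,x,y)$ and $v=(1,b,a,x,-y)$, the generic fourth-block row as $(1,z_i,z_i,A_i,B_i)$, and the generic fifth-block row as $(1,w_j,-w_j,C_j,D_j)$.

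The engine of the proof is the observation that $u$ and $v$ agree in their first and $x$-columns, carry the swapped pair $(a,b)$ versus $(b,a)$ in columns $2,3$, and carry $y$ versus $-y$. Subtracting the two original orthogonality conditions $\langle u,P\rangle=0$ and $\langle v,P\rangle=0$ against a common partner $P=(p_1,p_2,p_3,P_A,P_B)$ yields $(a-b)(\overline{p_2}-\overline{p_3})+2\langle y,P_B\rangle=0$. Specializing $P$ to the top row, to a fourth-block row, and to a fifth-block row then gives, respectively, $\sum_k y_k=0$, then $\langle y,B_i\rangle=0$ for every $i$, and finally $(a-b)\overline{w_j}+\langle y,D_j\rangle=0$ for every $j$. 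These three identities are the only inputs I will need.

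With these in hand I would go through the inner products of $H(\alpha)$ by block pairs and observe two distinct cancellation mechanisms. In products where the fifth block (the $w$-rows) meets the top row, a fourth-block row, or another fifth-block row, the columns $2,3$ contribute $\overline\alpha w_j$ and $-\overline\alpha w_j$ against a partner whose columns $2,3$ are equal (entries $1,1$ or $z_i,z_i$) or of matching $\pm$ type, so the $\alpha$-dependent terms either cancel outright or combine into $|\alpha|^2=1$; these products are therefore unchanged. In products involving $u$ or $v$ against the fourth or fifth block, the factor $\alpha$ from $\alpha y$ and the factor $\alpha$ produced by $\overline{\overline\alpha w_j}=\alpha\overline{w_j}$ line up and factor out of precisely the combination $\langle y,B_i\rangle$, respectively $(a-b)\overline{w_j}+\langle y,D_j\rangle$, which the compatibility relations force to vanish; the remaining $\alpha$-free part is the original (zero) value. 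The reason the $w$-block must be scaled by the \emph{conjugate} $\overline\alpha$ is exactly this matching: it is what makes the two $\alpha$-factors coincide so that the forced-zero term is what gets multiplied.

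For the second statement I would set $b=a$, so that the relation $(a-b)\overline{w_j}+\langle y,D_j\rangle=0$ collapses to $\langle y,D_j\rangle=0$ with the $w$-term gone. Repeating the block-pair check with $w$ scaled by an arbitrary unimodular $\gamma$ (and $y$ still scaled by $\alpha$), the critical products $\langle u,\cdot\rangle$ and $\langle v,\cdot\rangle$ against the fifth block now have their $w$-contributions of the form $a\overline{\gamma w_j}+a\overline{-\gamma w_j}=0$, cancelling identically because the equal coefficient $a=b$ sits on both $+w$ and $-w$; what survives is $1+\langle x,C_j\rangle+\alpha\langle y,D_j\rangle=0$. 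Thus the scaling of $w$ decouples completely from $\alpha$, and writing the free scalar as $\gamma=\alpha\beta$ gives the claimed two-parameter family. I expect the main obstacle to be purely organizational: keeping the ten or so block-pair inner products straight and, within each, correctly tracking the conjugations so that the $\overline\alpha$-versus-$\alpha$ matching (and the $|\alpha|^2=1$ simplifications) are applied in the right places; the mathematical content beyond that is light once the three compatibility identities are isolated.
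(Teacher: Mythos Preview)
Your proposal is correct and follows essentially the same approach as the paper: both arguments extract the identities $\langle 1^q,y\rangle=0$, $\langle B_i,y\rangle=0$, and $(a-b)\overline{w_j}+\langle y,D_j\rangle=0$ (equivalently $w_j(\overline a-\overline b)+\langle D_j,y\rangle=0$) by taking the sum and difference of the orthogonality relations for rows $2$ and $3$ against a common partner, and then verify that these are precisely what is needed so that the $\alpha$-dependent terms either vanish or factor through a forced-zero quantity. Your packaging via the single formula $(a-b)(\overline{p_2}-\overline{p_3})+2\langle y,P_B\rangle=0$ is a slightly cleaner organization of the same computation, but the content is identical.
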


\begin{proof}
We need to show that the rows of $H(\alpha,\beta)$ are pairwise
orthogonal. From the orthogonality of the first three rows of $H$
we get
\begin{equation*}
  \left.
  \begin{array}{lcc}
    \left\langle [1, 1, 1, 1^p, 1^q], [1, a , b, x, y]\right\rangle &
    = & 0 \\ 
    \left\langle [1, 1, 1, 1^p, 1^q], [1, b , a, x,
      -y]\right\rangle & = & 0 \\
  \end{array}
  \right\} \implies \left\langle 1^q, y \right\rangle=0,
\end{equation*}
\noindent
and
\begin{equation*}
  \begin{array}{rcl}
    0 = \left\langle [1, a, b, x, y], [1, b , a, x, -y]\right\rangle &
    = & 1 + a \overline{b} + b \overline{a} + p - q\\ 
    &= & \left\langle [1, a, b, x, \alpha y], [1, a , b, x, -\alpha
      y]\right\rangle, \\
  \end{array}
\end{equation*}
\noindent
and hence the first three rows of $H(\alpha,\beta)$ are pairwise
orthogonal. Similarly, it is easily seen that the rest of the rows
(beyond the first three) are pairwise orthogonal within
themselves. Additionally, the first row is trivially orthogonal to all
further rows. Therefore it remains to be seen that the second and
third rows of $H(\alpha,\beta)$ are orthogonal to all rows below them.

We show first that they are orthogonal to the rows which are of type $[1,
  z_i, z_i, A_i, B_i]$, $i=1,\hdots, r$. In the original matrix $H$
(i.e.,\ prior to parametrizing) we have 
\beql{ch3pars1}
1+z_i(\overline{a}+\overline{b})+\left\langle
A_i,x\right\rangle+\left\langle B_i, y\right\rangle=0, \eeq
\beql{ch3pars2} 1+z_i(\overline{a}+\overline{b})+\left\langle
A_i,x\right\rangle-\left\langle B_i, y\right\rangle=0, \eeq 
and hence
$\left\langle B_i,y\right\rangle=0$ for every $i=1,\hdots,r$. It
follows that after parametrization equations
\eqref{ch3pars1} and \eqref{ch3pars2} remain valid.

We proceed by proving that rows that are of type $[1, w_i, -w_i, C_i,
  D_i]$, $i=1,\hdots, s$, after parametrization, are orthogonal to the
second and third row of $H(\alpha,\beta)$. Again, in the original
matrix $H$ we have 
\beql{ch3pars3}
1+w_i\overline{a}-w_i\overline{b}+\left\langle
C_i,x\right\rangle+\left\langle D_i, y\right\rangle=0, \eeq
\beql{ch3pars4} 1-w_i\overline{a}+w_i\overline{b}+\left\langle
C_i,x\right\rangle-\left\langle D_i, y\right\rangle=0, \eeq 
and hence $\left\langle C_i,x\right\rangle=-1$ for every $i=1,\hdots,
s$. It follows, that \eqref{ch3pars3} are \eqref{ch3pars4} are valid,
provided that
\beql{ch3pars5} w_i\overline{a}-w_i\overline{b}+\left\langle D_i,
y\right\rangle=0 \eeq 
holds and therefore \eqref{ch3pars3} and \eqref{ch3pars4} remains true, after
parametrization. If, in addition, $b=a$, then $\left\langle D_i,
y\right\rangle=0$ for every $i=1,\hdots, s$, and hence
\eqref{ch3pars5} holds, independently of the scalar factor in $w$.

The one-parameter family $H(\alpha)$ can be considered as
$H(\alpha,\overline{\alpha})$. The equations \eqref{ch3pars1} -
\eqref{ch3pars4} hold as the condition $a = b$ is not required for
them. From the original matrix $H$ we get
\begin{equation*}
  w_i\overline{a}-w_i\overline{b}+\left\langle D_i,
  y\right\rangle = 0 \implies 
  \overline{\alpha}w_i\overline{a}-\overline{\alpha}w_i\overline{b}+\left\langle
  D_i, \alpha y\right\rangle = 0,
\end{equation*}
and \eqref{ch3pars5} holds for $H(\alpha,\overline{\alpha})$ also when
$a \neq b$.
\end{proof}

\begin{rem}
If $a$ and $b$ are as in Theorem \ref{ch1newp}, then it is easy to see
that the real part of $a\overline{b}$ is an integral number, and
therefore $b\in \pm a\cdot\{1,\omega,\omega^2,\mathbf{i}\}$, where
$\omega$ is a primitive complex third root of unity. Therefore one
hopes to apply the parametrizing scheme described for complex Hadamard
matrices with fourth and/or sixth roots of unity.
\end{rem}

Theorem \ref{ch1newp} describes a local property of the complex
Hadamard matrix $H$. Its conditions can be fairly easily checked, even
by hand, and it can be implemented as a computer program to construct
infinite families automatically. 

It is natural to ask how many degrees of freedom (i.e.,\ independent,
smooth parameters) can be introduced into a given complex Hadamard
matrix. We recall the following fundamental concept from \cite{karol}:
The defect $\mathrm{d}(H)$ of an $n\times n$ complex Hadamard matrix
$H$ reads $\mathrm{d}(H)=m-2n+1$, where $m$ is the dimension of the
solution space of the following real linear system with respect to the
matrix variable $R\in\mathbb{R}^{n\times n}$: \beql{s1def}
\sum\limits_{k=1}^n
H_{i,k}\overline{H}_{j,k}\left(R_{i,k}-R_{j,k}\right)=0,\ \ \ \ 1\leq
i<j\leq n.  \eeq

In what follows we motivate the formula \eqref{s1def}. Consider a
dephased complex Hadamard matrix $H$ and a phasing matrix $R$ whose
first row and column is $0$, and all other entries are a real
variables. Now consider the matrix
$K:=H\circ\mathrm{EXP}\left(\mathbf{i}R\right)$, where $\circ$ is the
entry-wise product and $\mathrm{EXP}$ is the entry-wise exponential
function, respectively. The resulting matrix $K$ is the most general
parametrized matrix, stemming from $H$. Note that $K$ is unimodular,
but not necessarily complex Hadamard. In order to ensure this latter
condition we spell out the orthogonality conditions of $K$, obtaining
\beql{s1gen}\left\langle K_i,K_j\right\rangle=\sum_{k=1}^n
H_{ik}\overline{H}_{kj}\mathbf{e}^{\mathbf{i}(R_{ik}-R_{kj})}=0,\ \ \ \ 1\leq
i<j\leq n, \eeq which, after linearizing the exponential function
(i.e.,\ replacing it with its first order Taylor expansion) leads to
\eqref{s1def}. Therefore those phasing matrices $R$ satisfying the
real linear system \eqref{s1def} lead to parametric families of
complex Hadamard matrices in a neighborhood of the initial matrix $H$,
up to first order; however, \eqref{s1gen} is far more restrictive
further decreasing the number of parameters in $R$ in general.

Note that the degree of freedom $m$ in the defect of an $n \times n$
matrix is decreased by $2n-1$ as this many parameters can always be 
introduced into a complex Hadamard matrix via multiplication by
unitary diagonal matrices. This operation, however, does not yield new
complex Hadamard matrices, up to equivalence, and therefore only dephased
families are considered. Matrices that cannot be parametrized in
any other way are called isolated. The most important properties
of the defect are summarized in the following result from
\cite{karol,TZ2}.

\begin{proposition}\label{propdef}
Let $H$ be a complex Hadamard matrix. Then
\begin{enumerate}[$($a$)$]
\item $\mathrm{d}(H)$ is an invariant, up to equivalence, moreover
  $\mathrm{d}(H)=\mathrm{d}(H^\ast)=\mathrm{d}(\overline{H})=\mathrm{d}(H^T)$;
\item the number of smooth parameters which can be introduced into $H$
  is at most $\mathrm{d}(H)$;
\item if $\mathrm{d}(H)=0$ then $H$ is isolated amongst all $n\times n$ complex Hadamard matrices.
\end{enumerate}
\end{proposition}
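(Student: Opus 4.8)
The plan is to reinterpret the defect through the local deformation picture already sketched before the statement. Every complex Hadamard matrix close to a fixed $H$ has unimodular entries close to those of $H$, hence is of the form $K=H\circ\mathrm{EXP}(\mathbf{i}R)$ for a unique small real matrix $R$; so the local geometry of the set of complex Hadamard matrices near $H$ is governed by the real-analytic variety of those $R$ for which $K$ is orthogonal. Its linearization at $R=0$ is exactly the system \eqref{s1def}, whose solution space $\mathcal{S}(H)$ has dimension $m$. The matrices $R$ with $R_{i,k}=c_i+d_k$ (real $c_i,d_k$) always solve \eqref{s1def}, since then $R_{i,k}-R_{j,k}=c_i-c_j$ factors out of the already-vanishing inner product $\sum_k H_{i,k}\overline{H}_{j,k}$; these form a $(2n-1)$-dimensional \emph{trivial subspace} $\mathcal{T}\subseteq\mathcal{S}(H)$, arising from row and column phase multiplications, so that $\mathrm{d}(H)=\dim\mathcal{S}(H)-\dim\mathcal{T}\ge 0$.

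For (a) I would treat every operation generating equivalence, together with transposition and conjugation, as a symmetry $\sigma$ of the set of complex Hadamard matrices, and exhibit for each a linear isomorphism $\phi_\sigma$ of $\mathbb{R}^{n\times n}$ with
\[\sigma\bigl(H\circ\mathrm{EXP}(\mathbf{i}R)\bigr)=\sigma(H)\circ\mathrm{EXP}\bigl(\mathbf{i}\,\phi_\sigma(R)\bigr).\]
Concretely, $\phi_\sigma=\mathrm{id}$ for row/column phase multiplication (the phases are absorbed into $\sigma(H)$), $\phi_\sigma(R)=P_1RP_2$ for row/column permutations, $\phi_\sigma(R)=R^T$ for transposition, and $\phi_\sigma(R)=-R$ for complex conjugation; the conjugate transpose is the composite of the last two. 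Since $\sigma$ is a bijection of complex Hadamard matrices and $\phi_\sigma$ is a linear isomorphism, $\phi_\sigma$ carries $\mathcal{S}(H)$ onto $\mathcal{S}(\sigma(H))$; one then checks that each $\phi_\sigma$ preserves $\mathcal{T}$ (e.g.\ $R_{i,k}=c_i+d_k$ transposes to $d_i+c_k$, again of that form). Hence both $\dim\mathcal{S}$ and $\dim\mathcal{T}$ are preserved and $\mathrm{d}$ is invariant, giving all four claimed equalities.

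Parts (b) and (c) I would read off the same linearization. For (b), a family through $H$ depending on $p$ essential parameters can be written $R(t)$ with $R(0)=0$; differentiating the orthogonality conditions $\langle K_i,K_j\rangle=0$ at $t=0$ shows each velocity $\partial R/\partial t_\ell|_0$ lies in $\mathcal{S}(H)$, and after dephasing (which kills exactly $\mathcal{T}$) the $p$ velocities are independent in the $\mathrm{d}(H)$-dimensional quotient $\mathcal{S}(H)/\mathcal{T}$, forcing $p\le\mathrm{d}(H)$. For (c), pass to dephased coordinates (first row and column of $R$ set to $0$), where $\mathcal{T}$ collapses to $\{0\}$ and the orthogonality map $F$ sending $R$ to the tuple of inner products $\langle K_i,K_j\rangle$ has differential at $0$ with kernel $\mathcal{S}(H)/\mathcal{T}$; if $\mathrm{d}(H)=0$ this differential is injective, so $F$ is an immersion at $0$, hence locally injective, whence $F^{-1}(0)=\{H\}$ in a neighborhood and $H$ is isolated.

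I expect the genuine obstacle to be (c): the vanishing of $\mathrm{d}(H)$ only says that there are no first-order deformations, and upgrading this to the absence of \emph{any} nontrivial deformation requires the inverse/immersion function theorem applied in properly dephased coordinates, so that the $2n-1$ always-present phase directions are quotiented out and the remaining differential is honestly injective. The only delicate point in (a) is transposition, whose invariance is not visible from \eqref{s1def} directly but becomes transparent through the reparametrization $R\mapsto R^T$; everything else reduces to bookkeeping on the coefficients $H_{i,k}\overline{H}_{j,k}$.
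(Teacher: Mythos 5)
Your proposal cannot be compared against a proof in the paper, because the paper gives none: Proposition \ref{propdef} is imported from \cite{karol,TZ2}, and the discussion surrounding \eqref{s1def} and \eqref{s1gen} only \emph{motivates} the definition of the defect. What you have written is, in substance, a correct reconstruction of the arguments of Tadej and \.Zyczkowski \cite{TZ2}: the identification of the trivial $(2n-1)$-dimensional subspace $\mathcal{T}$ of solutions $R_{i,k}=c_i+d_k$ (which is exactly why the paper's formula reads $\mathrm{d}(H)=m-2n+1$), the transport of the linear system \eqref{s1def} under equivalence operations, conjugation and transposition via the substitutions $\phi_\sigma$, and the inverse-function-theorem argument in dephased coordinates for part (c). Your reading of the logical structure also matches the paper's own remark that (c) does not follow from (b): (b) is a statement about first-order velocities, whereas (c) requires local injectivity of the orthogonality map, genuinely more than the absence of first-order deformations.

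Two refinements are needed to make the argument airtight. First, in (a) the inference ``$\sigma$ is a bijection of complex Hadamard matrices and $\phi_\sigma$ is a linear isomorphism, hence $\phi_\sigma$ carries $\mathcal{S}(H)$ onto $\mathcal{S}(\sigma(H))$'' is not valid as stated: two different systems of equations can cut out the same variety while having different linearizations, so equality of the varieties does not by itself transfer the solution spaces of the \emph{linearized} systems. The correct justification is already contained in your displayed identity: under the substitution $S=\phi_\sigma(R)$, the orthogonality equations \eqref{s1gen} for $\sigma(H)$ become a re-indexing (possibly with conjugation) of those for $H$, and linearization commutes with the linear substitution $\phi_\sigma$; hence the systems \eqref{s1def} themselves correspond, which is what yields $\phi_\sigma\bigl(\mathcal{S}(H)\bigr)=\mathcal{S}\bigl(\sigma(H)\bigr)$. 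Second, in (b) the linear independence of the $p$ velocity vectors in $\mathcal{S}(H)/\mathcal{T}$ is not automatic from injectivity of the family (consider reparametrizations with vanishing derivative); it requires the definition of ``$p$ independent smooth parameters'' to include an immersion-type condition, as it does in \cite{TZ2}. Finally, in (c) one should add the routine observation that any complex Hadamard matrix sufficiently close to the dephased $H$ dephases, with phases close to $1$, to a dephased Hadamard matrix close to $H$, so that local injectivity of $F$ in dephased coordinates really does give isolation among \emph{all} nearby complex Hadamard matrices up to equivalence. With these clarifications your argument is complete and is, for practical purposes, the proof that the paper's references supply.
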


Note that part (c) does not require the smoothness condition and as a
result it does not follow from part (b).

\begin{rem}\label{rem6}
It might be possible to use \eqref{s1def} to extend known affine
families with further parameters as follows. Suppose that
$H\left(\alpha\right)$ is a dephased parametric family of complex
Hadamard matrices. Let $k>1$ be an integer and evaluate
$H\left(\alpha\right)$ at $k$ random points to obtain a series of
dephased complex Hadamard matrices $H_1, H_2,\hdots, H_k$. Now
consider a phasing matrix $R$ (whose entries are real variables) and
the resulting general parametric matrices
$K_i=H_i\circ\mathrm{EXP}\left(\mathbf{i}\mathrm{R}\right)$ and the
corresponding $k$ system of linear equations given by \eqref{s1def}
for $i=1,\hdots, k$. A common solution of this $k$ system, which
should be further refined by \eqref{s1gen}, might result in families
of complex Hadamard matrices with more free parameters than the
initial one $H\left(\alpha\right)$.
\end{rem}

It is easy to see that a dephased complex Hadamard matrix is
equivalent to a $\BH(q,n)$ matrix if and only if all of its entries are
some $q$th root of unity; consequently it is easy to obtain all
$\BH(q,n)$ matrices which are members of some parametric family
$H(\alpha)$, stemming from the starting point $\BH(q,n)$ matrix $H$.

\section{The quaternary complex Hadamard matrices of order $10$}\label{ch4}

In this section we report on our findings regarding $\BH(4,10)$
matrices.

\begin{theorem}\label{prop1a}
There are exactly $10$ $\BH(4,10)$ matrices, up to equivalence, forming
$7$ $\mathrm{ACT}$-equivalence classes.
\end{theorem}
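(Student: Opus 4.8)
The plan is to obtain the count of $10$ inequivalent matrices directly from the exhaustive search engine described in Section \ref{ch2}, and then to post-process its output to collapse the list into $\mathrm{ACT}$-equivalence classes. Concretely, I would instantiate the weak canonical augmentation procedure with $q=4$ and $n=10$: starting from the single equivalence class of dephased $\BH(4,1,10)$ matrices, I would grow the search tree one row at a time, at each step appending all rows that preserve pairwise orthogonality (so that the partial matrix is a candidate $\BH(4,m,10)$), and rejecting every node that is not built in the canonical way prescribed by the weak canonical parent function $w$ of \eqref{weak_parent}. Since $w(X)$ is defined through the smallest canonical graph in $P(X)$ and canonicity is computed by \textsc{Nauty}, equivalent nodes are forced to be siblings, and only inequivalent representatives survive at depth $m=n=10$. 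Reading off the number of surviving nodes at the final level gives the claimed total of $10$.

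Before trusting that total, I would run the double-counting consistency check of Section \ref{ch2}. On the one hand, the orbit-stabilizer theorem applied to the automorphism group of each canonical graph yields the size of each equivalence class of $\BH(4,m,10)$ matrices; on the other hand, the total number of $\BH(4,m,10)$ matrices can be recovered from the number of $\BH(4,m-1,10)$ matrices together with the number of valid one-row augmentations of each. Starting the recursion one level earlier at $\BH(4,1,10)$, whose single class has size $4^{10}$, agreement of the two counts at every level $m=2,\dots,10$ certifies that no branch of the tree was missed and that no equivalence class was double-counted.

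With the list of $10$ inequivalent matrices in hand, the remaining task is to merge them into $\mathrm{ACT}$-classes. For each representative $H$ I would form $H^\ast$, $\overline{H}$ and $H^T$, dephase each, and test equivalence against the $10$ representatives using the canonical graph as the definitive invariant; the cheaper invariants of Section \ref{preliminaries}, namely the $\mathbb{Z}_4$-rank and the numbers of $k\times k$ vanishing minors for $2\le k\le 8$, serve as fast separators that let me rule out most pairs without a full isomorphism test. Declaring two representatives $\mathrm{ACT}$-equivalent exactly when one of $H^\ast,\overline{H},H^T$ is equivalent to the other partitions the $10$ matrices into $7$ blocks, as asserted.

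The main obstacle is not any single calculation but guaranteeing completeness of the tree search: the correctness of the count rests entirely on the property \eqref{weak_parent} of $w$ and on \textsc{Nauty} returning genuine canonical forms, so the double-counting check is what actually discharges the ``exactly $10$'' claim. The $\mathrm{ACT}$ bookkeeping is comparatively routine, the only subtlety being that equivalence must be tested after dephasing each of the transposed and conjugated matrices, since the operations $H\mapsto H^\ast,\overline{H},H^T$ need not preserve the dephased form.
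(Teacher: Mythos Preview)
Your proposal is correct and matches the paper's approach: the theorem is a computational result obtained precisely by running the weak canonical augmentation search of Section~\ref{ch2} with $q=4$, $n=10$, certifying the output via the double-counting check, and then collapsing the $10$ representatives into $\mathrm{ACT}$-classes by testing equivalence of each matrix against the adjoint, conjugate, and transpose of the others (the outcome being recorded in Table~\ref{T1}).
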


Now we present affine families containing all inequivalent $\BH(4,n)$
matrices. The first family, \[
D_{10}^{(3)}(a,b,c)=\left[
\begin{array}{rrrrrrrrrrrr}
 1 & 1 & 1 & 1 & 1 & 1 & 1 & 1 & 1 & 1 \\
 1 & -1 & -\mathbf{i} a\overline{b} & -\mathbf{i} a & -\mathbf{i}\overline{c} & -\mathbf{i} & \mathbf{i}\overline{c} & \mathbf{i} a & \mathbf{i} a\overline{b} &
   \mathbf{i} \\
 1 & -\mathbf{i} b\overline{a} & -1 & \mathbf{i} b & \mathbf{i}\overline{c} & -\mathbf{i} & -\mathbf{i}\overline{c} & -\mathbf{i} b & \mathbf{i} & \mathbf{i}
   b\overline{a} \\
 1 & -\mathbf{i}\overline{a} & \mathbf{i}\overline{b} & -1 & -\mathbf{i} & \mathbf{i} & -\mathbf{i} & \mathbf{i} & -\mathbf{i}\overline{b} & \mathbf{i}\overline{a} \\
 1 & -\mathbf{i} c & \mathbf{i} c & -\mathbf{i} & -1 & \mathbf{i} & \mathbf{i} & -\mathbf{i} & \mathbf{i} c & -\mathbf{i} c \\
 1 & -\mathbf{i} & -\mathbf{i} & \mathbf{i} & \mathbf{i} & -1 & \mathbf{i} & \mathbf{i} & -\mathbf{i} & -\mathbf{i} \\
 1 & \mathbf{i} c & -\mathbf{i} c & -\mathbf{i} & \mathbf{i} & \mathbf{i} & -1 & -\mathbf{i} & -\mathbf{i} c & \mathbf{i} c \\
 1 & \mathbf{i}\overline{a} & -\mathbf{i}\overline{b} & \mathbf{i} & -\mathbf{i} & \mathbf{i} & -\mathbf{i} & -1 & \mathbf{i}\overline{b} & -\mathbf{i}\overline{a} \\
 1 & \mathbf{i} b\overline{a} & \mathbf{i} & -\mathbf{i} b & \mathbf{i}\overline{c} & -\mathbf{i} & -\mathbf{i}\overline{c} & \mathbf{i} b & -1 & -\mathbf{i}
   b\overline{a} \\
 1 & \mathbf{i} & \mathbf{i} a\overline{b} & \mathbf{i} a & -\mathbf{i}\overline{c} & -\mathbf{i} & \mathbf{i}\overline{c} & -\mathbf{i} a & -\mathbf{i} a\overline{b} &
   -1
\end{array}
\right], \] has been obtained in \cite{SZF1}. It contains three out of
the seven $\mathrm{ACT}$-classes, namely $D_{10}^{(3)}(1,1,1)$,
$D_{10}^{(3)}(1,1,\mathbf{i})$ and
$D_{10}^{(3)}(1,\mathbf{i},\mathbf{i})$. Note that its orbit can be
constructed by repeatedly using Theorem \ref{ch1newp}.

The second family reads

\[N_{10B}^{(3)}(a,b,c)=\left[
\begin{array}{rrrrrrrrrrrr}
 1 & 1 & 1 & 1 & 1 & 1 & 1 & 1 & 1 & 1 \\
 1 & 1 & -1 & -1 & 1 & 1 & -\mathbf{i} & -1 & -1 & \mathbf{i} \\
 1 & a & c & -\mathbf{i} c & -a & -1 & -c & -\mathbf{i} a b c & \mathbf{i} a b c & \mathbf{i} c \\
 1 & -a & -\mathbf{i} c & c & a & -1 & -c & \mathbf{i} a b c & -\mathbf{i} a b c & \mathbf{i} c \\
 1 & -\mathbf{i} & -\mathbf{i} \overline{a}c & \mathbf{i} \overline{a}c & -1 & 1 & \mathbf{i} & b c & -b c & -1 \\
 1 & \mathbf{i} \overline{b} & -\mathbf{i} \overline{ab}c & \mathbf{i} \overline{ab}c & -\mathbf{i} \overline{b} & -1 & c & -c & \mathbf{i} c & -\mathbf{i} c \\
 1 & -\mathbf{i} \overline{b} & \mathbf{i} \overline{ab}c & -\mathbf{i} \overline{ab}c & \mathbf{i} \overline{b} & -1 & c & \mathbf{i} c & -c & -\mathbf{i} c \\
 1 & -1 & \mathbf{i} \overline{a}c & -\mathbf{i} \overline{a}c & -\mathbf{i} & 1 & \mathbf{i} & - bc &  bc & -1 \\
 1 & \mathbf{i} & -c & -c & \mathbf{i} & -\mathbf{i} & -1 & c & c & -\mathbf{i} \\
 1 & -1 & \mathbf{i} c & \mathbf{i} c & -1 & \mathbf{i} & -\mathbf{i} & -\mathbf{i} c & -\mathbf{i} c & 1
\end{array}
\right],\]

\noindent
whose one-parametric subfamily $N_{10B}^{(1)}(a)=N_{10B}^{(3)}(a,1,1)$
was reported in \cite{TZW}. This matrix contains two
$\mathrm{ACT}$-classes, namely $N_{10B}^{(3)}(1,1,1)$ and
$N_{10B}^{(3)}(\mathbf{i},1,1)$, so despite the two-degree extension,
which was obtained by a repeated application of Theorem \ref{ch1newp},
no previously unknown $\BH(4,10)$ matrix surfaced.

The two remaining matrices can be obtained from complex Golay
sequences \cite{CHK}, and they belong to the family

\[G_{10}^{(1)}(a)=\left[
\begin{array}{rrrrr|rrrrr}
 1 & a & a^2 & \mathbf{i} a^3 & -\mathbf{i} a^4 & 1 & \mathbf{i} a & -\mathbf{i} a^2 & -a^3 & \mathbf{i} a^4 \\
 -\mathbf{i} a^4 & 1 & a & a^2 & \mathbf{i} a^3 & \mathbf{i} a^4 & 1 & \mathbf{i} a & -\mathbf{i} a^2 & -a^3 \\
 \mathbf{i} a^3 & -\mathbf{i} a^4 & 1 & a & a^2 & -a^3 & \mathbf{i} a^4 & 1 & \mathbf{i} a & -\mathbf{i} a^2 \\
 a^2 & \mathbf{i} a^3 & -\mathbf{i} a^4 & 1 & a & -\mathbf{i} a^2 & -a^3 & \mathbf{i} a^4 & 1 & \mathbf{i} a \\
 a & a^2 & \mathbf{i} a^3 & -\mathbf{i} a^4 & 1 & \mathbf{i} a & -\mathbf{i} a^2 & -a^3 & \mathbf{i} a^4 & 1 \\
 \hline
 1 & -\mathbf{i} \overline{a}^4 & -\overline{a}^3 & \mathbf{i} \overline{a}^2 & -\mathbf{i} \overline{a} & -1 & -\mathbf{i} \overline{a}^4 & \mathbf{i} \overline{a}^3 & -\overline{a}^2 & -\overline{a} \\
 -\mathbf{i} \overline{a} & 1 & -\mathbf{i} \overline{a}^4 & -\overline{a}^3 & \mathbf{i} \overline{a}^2 & -\overline{a} & -1 & -\mathbf{i} \overline{a}^4 & \mathbf{i} \overline{a}^3 & -\overline{a}^2 \\
 \mathbf{i} \overline{a}^2 & -\mathbf{i} \overline{a} & 1 & -\mathbf{i} \overline{a}^4 & -\overline{a}^3 & -\overline{a}^2 & -\overline{a} & -1 & -\mathbf{i} \overline{a}^4 & \mathbf{i} \overline{a}^3 \\
 -\overline{a}^3 & \mathbf{i} \overline{a}^2 & -\mathbf{i} \overline{a} & 1 & -\mathbf{i} \overline{a}^4 & \mathbf{i} \overline{a}^3 & -\overline{a}^2 & -\overline{a} & -1 & -\mathbf{i} \overline{a}^4 \\
 -\mathbf{i} \overline{a}^4 & -\overline{a}^3 & \mathbf{i} \overline{a}^2 & -\mathbf{i} \overline{a} & 1 & -\mathbf{i} \overline{a}^4 & \mathbf{i} \overline{a}^3 & -\overline{a}^2 & -\overline{a} & -1
\end{array}
\right].\]

The matrices $G_{10}^{(1)}(1)$ and $G_{10}^{(1)}(-1)$ are inequivalent
from all previously considered examples. We note here that the family
$G_{10}^{(1)}$ implies the existence of an infinite family of triplets
of pairwise mutually unbiased bases in $\mathbb{C}^{10}$ by a
construction of Zauner \cite{Z}.

\begin{theorem}\label{prop1b}
Each member of the $\mathrm{ACT}$-equivalence classes of $\BH(4,10)$
matrices can be obtained from three partially overlapping infinite
affine parametric families of complex Hadamard matrices. These
matrices, up to $\mathrm{ACT}$-equivalence, can be recognized by the
number of $3\times 3$ vanishing minors they contain.
\end{theorem}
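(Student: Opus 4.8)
The plan is to combine the exact count of seven ACT-classes furnished by Theorem~\ref{prop1a} with a single combinatorial invariant. Since the three displayed families are already affine parametric families of complex Hadamard matrices---the orbits of $D_{10}^{(3)}$ and $N_{10B}^{(3)}$ are generated by repeated application of Theorem~\ref{ch1newp}, and $G_{10}^{(1)}$ arises from complex Golay sequences via its circulant block structure---the first step is to substitute the listed fourth-root-of-unity parameter values and record the seven resulting specializations: $D_{10}^{(3)}(1,1,1)$, $D_{10}^{(3)}(1,1,\mathbf{i})$, $D_{10}^{(3)}(1,\mathbf{i},\mathbf{i})$ from the first family, $N_{10B}^{(3)}(1,1,1)$ and $N_{10B}^{(3)}(\mathbf{i},1,1)$ from the second, and $G_{10}^{(1)}(1)$, $G_{10}^{(1)}(-1)$ from the third. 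In each case one checks that every entry is a fourth root of unity and that the rows are orthogonal, so that these are genuine $\BH(4,10)$ matrices.

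The core step is to show that these seven matrices represent \emph{distinct} ACT-classes, and I would do this using the number of $3\times 3$ vanishing minors, which simultaneously establishes the second sentence of the theorem. First I would verify that this count is an ACT-invariant: by the vanishing-minor lemma of Section~\ref{preliminaries} it is already an equivalence invariant, while passing to $H^T$ leaves every $3\times 3$ minor unchanged and passing to $\overline{H}$ or $H^\ast$ replaces each minor by its complex conjugate, so the set of vanishing $3\times 3$ minors---and hence its cardinality---is preserved under all four operations. With ACT-invariance in hand, I would compute the number of vanishing $3\times 3$ minors for each of the seven representatives and verify that the seven resulting integers are pairwise distinct, which forces the seven matrices into seven different ACT-classes.

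The classification then follows by counting. Theorem~\ref{prop1a} guarantees exactly seven ACT-classes of $\BH(4,10)$ matrices, and the previous step exhibits seven pairwise ACT-inequivalent members drawn from the three families; these representatives must therefore exhaust all seven classes. Consequently every $\BH(4,10)$ matrix is ACT-equivalent to a specialization of $D_{10}^{(3)}$, $N_{10B}^{(3)}$, or $G_{10}^{(1)}$, and the $3\times 3$ vanishing-minor count distinguishes the classes, exactly as claimed.

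The main obstacle is the last computational step: each $10\times 10$ matrix carries $\binom{10}{3}^2 = 14400$ many $3\times 3$ minors, so the counts are obtained by machine rather than by hand. More essentially, it is not guaranteed \emph{a priori} that one invariant separates all seven classes---the seven counts simply turn out to be distinct. Were two of them to coincide, one would have to fall back on a finer separator, such as the full fingerprint of \cite{SZF1} or the vanishing-minor counts at another value of $k$; the substance of the theorem is precisely that the coarse $3\times 3$ count already suffices.
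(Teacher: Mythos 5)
Your proposal is correct and follows essentially the same route as the paper, whose (implicit) proof consists of the exhaustive classification in Theorem~\ref{prop1a}, the identification of the seven ACT-class representatives as specializations $D_{10}^{(3)}(1,1,1)$, $D_{10}^{(3)}(1,1,\mathbf{i})$, $D_{10}^{(3)}(1,\mathbf{i},\mathbf{i})$, $N_{10B}^{(3)}(1,1,1)$, $N_{10B}^{(3)}(\mathbf{i},1,1)$, $G_{10}^{(1)}(1)$, $G_{10}^{(1)}(-1)$, and the machine-computed $3\times 3$ vanishing-minor counts recorded in Table~\ref{T1} ($2032$, $2496$, $3600$, $2080$, $1568$, $1580$, $1600$), which are pairwise distinct exactly as your counting argument requires. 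Your explicit verification that the vanishing-minor count is invariant not just under equivalence but under adjoint, conjugation, and transposition is a detail the paper leaves tacit, and is a worthwhile addition.
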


It turns out that all $\BH(4,10)$ matrices appear in the existing
literature, however, the parametric families $N_{10B}^{(3)}(a,b,c)$
and $G_{10}^{(1)}(a)$ are considered here for the first time. The
authors believe that these families contain new, previously unknown
$\BH(q,10)$ matrices for some $q>4$. Table \ref{T1} summarizes the
properties of $\BH(4,10)$ matrices for which the legend is as follows:
the column ``ACT'' describes if the matrix is equivalent to its
adjoint, conjugate or transpose, respectively, while the column
``HBS'' indicates if it is equivalent to a Hermitian matrix, if
contains a $n/2\times n/2$ sub-Hadamard matrix (i.e.,\ it comes from a
doubling construction, see \cite{popa}) or if is equivalent to a
symmetric matrix, respectively; the rest of the column headings speak
for themselves.

\begin{table}[h]
\centering % centering table 
\caption{Summary of the $\BH(4,10)$ matrices}\label{T1} % title name of the table 
\begin{tabular}{cclccccccc} % creating 10 columns 
%\hline %\hline% inserting double-line
\hline
ACT & Equiv. & Family, & ACT & HBS & Auto. & Defect & Orbit & $\mathbb{Z}_4$ & Invariant\\
class & classes & coordinates & & & order & & & rank & $(3\times 3)$\\
\hline
$1$ & $1$     & $D_{10}^{(3)}(1,1,\mathbf{i})$  & YYY & Y--Y & 64   & 10 & 3 & 9 & 2032\\
$2$ & $2, 5$  & $N_{10B}^{(3)}(\mathbf{i},1,1)$ & NNY & N--Y & 192  & 11 & 3 & 9 & 2496\\
$3$ & $3$     & $D_{10}^{(3)}(1,1,1)$           & YYY & Y--Y & 2880 & 16 & 3 & 9 & 3600\\
$4$ & $4$     & $D_{10}^{(3)}(1,\mathbf{i},\mathbf{i})$ & YYY & Y--Y & 32   & 12 & 3 & 9 & 2080\\
$5$ & $6,8$   & $N_{10B}^{(3)}(1,1,1)$          & NNY & N--Y & 64   & 7  & 3 & 9 & 1568\\
$6$ & $7$     & $G_{10}^{(1)}(1)$               & YYY & Y--Y & 20   & 8  & 1 & 9 & 1580\\
$7$ & $9, 10$ & $G_{10}^{(1)}(-1)$              & NYN & N--N & 80   & 8  & 1 & 9 & 1600\\
\hline
\end{tabular}
\end{table}

\section{The quaternary complex Hadamard matrices of order $12$}\label{ch5}

The main result concerning $\BH(4,12)$ matrices is the following.

\begin{theorem}
There are exactly $319$ $\BH(4,12)$ matrices, up to equivalence,
forming $167$ $\mathrm{ACT}$-equivalence classes.
\end{theorem}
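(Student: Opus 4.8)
The plan is to establish this count by an exhaustive computer search, following the canonical augmentation framework of Section~\ref{ch2} specialized to $q=4$ and $n=12$. Since every $\BH(4,12)$ matrix is equivalent to a dephased one, I would only ever generate dephased candidate matrices $\BH(4,m,12)$ and organize them into the search tree whose root is the unique dephased $\BH(4,1,12)$ matrix (the all-ones row, which forms a single equivalence class of size $4^{12}$). The child nodes of a $\BH(4,m,12)$ are exactly the $\BH(4,m+1,12)$ obtained by appending a row orthogonal to all existing rows; traversing this tree in depth-first order and accepting a node only when it has been reached by the weak canonical augmentation specified by \eqref{w_seq} guarantees that each equivalence class is visited exactly once. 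Equivalence testing, and the total order $\leq_g$ needed to define the weak canonical parent $w(X)$, would be supplied by the canonical graphs produced by \textsc{Nauty} for the equivalence graphs of the candidate matrices.

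First I would run the augmentation up from $m=1$ to $m=12$, at each level retaining one representative per equivalence class. The inequivalent completed matrices at level $m=12$ are precisely the inequivalent $\BH(4,12)$ matrices, and the claim is that there are $319$ of them. The second half of the statement is then obtained by folding these $319$ classes under $\mathrm{ACT}$-equivalence: for each representative $H$ I would compute $H^\ast$, $\overline{H}$ and $H^T$, dephase each, and test---again via canonical graphs---whether any of them is equivalent to a distinct representative. Collecting the representatives into orbits under this coarser relation yields the $\mathrm{ACT}$-equivalence classes, whose number the claim asserts is $167$. The invariants available to us---the number of $k\times k$ vanishing minors from the lemma on vanishing minors in Section~\ref{preliminaries}, the $\mathbb{Z}_4$-rank, and the defect $\mathrm{d}(H)$---can be used both to speed up these comparisons and, since $\mathrm{d}(H)=\mathrm{d}(H^\ast)=\mathrm{d}(\overline{H})=\mathrm{d}(H^T)$ by Proposition~\ref{propdef}(a), to certify immediately that two matrices with distinct defects lie in different $\mathrm{ACT}$-classes.

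Because the correctness of a classification result rests entirely on the search being genuinely exhaustive, the crucial step is an independent verification of the counts, for which I would use the double-counting consistency check described at the end of Section~\ref{ch2}. On the one hand, the size of each equivalence class of $\BH(4,m,12)$ matrices is recovered from the inequivalent representatives and the automorphism-group orders of their canonical graphs by the orbit--stabilizer theorem; on the other hand, the total number of $\BH(4,m,12)$ matrices is computed from the number of $\BH(4,m-1,12)$ matrices together with the number of admissible row-augmentations of each. Starting the double counting at the $\BH(4,1,12)$ level and requiring the two totals to agree at every level $m$ up to $12$ provides a strong internal check that no equivalence class has been missed or double-counted.

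The main obstacle is computational rather than conceptual: the number of candidate $\BH(4,m,12)$ matrices grows very rapidly with $m$, so the bottleneck is keeping the search tractable while remaining provably complete. Pruning by equivalence via weak canonical augmentation is what makes the search feasible, and parallelizing the depth-first traversal---an advantage of weak canonical augmentation over the breadth-first search used in \cite{lampio}---is what makes the largest levels attainable. I expect the most delicate practical points to be the reliable enumeration of admissible augmentation rows at the intermediate levels and the reconciliation of the two independent counts; once these agree at every level, the value $319$, and after the $\mathrm{ACT}$-folding the value $167$, follows.
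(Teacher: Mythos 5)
Your proposal is correct and follows essentially the same route as the paper: an exhaustive search over dephased candidate $\BH(4,m,12)$ matrices via weak canonical augmentation, with equivalence testing through \textsc{Nauty} canonical graphs, the orbit--stabilizer double-counting consistency check, and a final folding of the $319$ equivalence classes under $\mathrm{ACT}$-equivalence to obtain the $167$ classes. This is precisely the method of Section~\ref{ch2} that the paper invokes for this theorem.
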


Because all but four representatives of the $\mathrm{ACT}$-equivalence
classes contain a pair of real rows, almost all matrices belong to
some parametric families through Lemma \ref{spl1}. It turned out that
the exceptional four matrices can be parametrized as well by solving
\eqref{s1def} and \eqref{s1gen}, and they can be described by two
parametric families (consult the families $L_{12A}^{(1)}(a)$ and
$L_{12B}^{(1)}(a)$ in Appendix \ref{APPA}). Therefore every
$\BH(4,12)$ matrix belongs to some parametric family. We, however,
tried to minimize the number of parametric families required for the
presentation of all $\BH(4,12)$ matrices by attempting to describe
families capturing essentially different properties of these
matrices. We managed to describe the matrices with the aid of 23
families. In what follows we display four of the $23$ families
containing altogether a considerable number of distinct
$\mathrm{ACT}$-classes.

The first family, stemming from the real Hadamard matrix $H_{12}$, has
$10$ free parameters, and is the largest known affine family of order
$12$. It contains $36$ distinct $\mathrm{ACT}$-classes.

\[
H_{12B}^{(10)}(a,b,c,d,e,f,g,h,i,j)=
\]
\[\left[
\begin{array}{rrrrrrrrrrrr}
 1 & 1 & 1 & 1 & 1 & 1 & 1 & 1 & 1 & 1 & 1 & 1 \\
 1 & 1 & 1 & 1 & 1 & 1 & -1 & -1 & -1 & -1 & -1 & -1 \\
 1 & 1 & -1 & -1 & a & -a & c & -c & e & e & -e & -e \\
 1 & 1 & -1 & -1 & -a & a & c & -c & -e & -e & e & e \\
 1 & 1 & b & -b & -1 & -1 & -c & c & g & -g & h & -h \\
 1 & 1 & -b & b & -1 & -1 & -c & c & -g & g & -h & h \\
 1 & -1 & d & d & -d & -d & c d & -c d & f g\overline{b} & -f g\overline{b} & -f
   h\overline{b} & f h\overline{b} \\
 1 & -1 & -d & -d & d & d & -c d & c d & f g\overline{b} & -f g\overline{b} & -f
   h\overline{b} & f h\overline{b} \\
 1 & -1 & f & -f & i & -i & -e i\overline{a} & -e i\overline{a} & -f g\overline{b} & f
   g\overline{b} & e i\overline{a} & e i\overline{a} \\
 1 & -1 & f & -f & -i & i & e i\overline{a} & e i\overline{a} & -f g\overline{b} & f
   g\overline{b} & -e i\overline{a} & -e i\overline{a} \\
 1 & -1 & -f & f & j & -j & e j\overline{a} & e j\overline{a} & -e j\overline{a} & -e
   j\overline{a} & f h\overline{b} & -f h\overline{b} \\
 1 & -1 & -f & f & -j & j & -e j\overline{a} & -e j\overline{a} & e j\overline{a} & e
   j\overline{a} & f h\overline{b} & -f h\overline{b}
\end{array}
\right]
\]
The second matrix has $8$ parameters and contains $43$
distinct $\mathrm{ACT}$-equivalence classes of the $\BH(4,12)$
matrices.  \[ H_{12C}^{(8)}(a,b,c,d,e,f,g,h)= \]
\[\left[
\begin{array}{rrrrrrrrrrrr}
 1 & 1 & 1 & 1 & 1 & 1 & 1 & 1 & 1 & 1 & 1 & 1 \\
 1 & 1 & c & 1 & 1 & a & -a & -c & -1 & -1 & -1 & -1 \\
 1 & 1 & -1 & -b & b & -a & a & -1 & e & d & -d & -e \\
 1 & 1 & -f & -1 & -1 & f & f & -f & -e f & -d & d & e f \\
 1 & 1 & f & -1 & -1 & -f & -f & f & e f & -d & d & -e f \\
 1 & 1 & -c & b & -b & -1 & -1 & c & -e & d & -d & e \\
 1 & -1 & c g h & b g h & -b g h & -a g h & a g h & -c g h & h & -h & -h & h \\
 1 & -1 & -c g h & -b g h & b g h & a g h & -a g h & c g h & h & -h & -h & h \\
 1 & -1 & c h & -b h & b h & -h & -h & -c h & -e h & h & h & e h \\
 1 & -1 & h & -h & -h & h & h & h & -h & d h & -d h & -h \\
 1 & -1 & -c h & h & h & -a h & a h & c h & -h & -d h & d h & -h \\
 1 & -1 & -h & b h & -b h & a h & -a h & -h & e h & h & h & -e h
\end{array}
\right].\]

\noindent
The families $H_{12B}^{(10)}$ and $H_{12C}^{(8)}$ intersect in $19$
$\mathrm{ACT}$-classes and they represent $60$ distinct
$\mathrm{ACT}$-classes.  The next family comes from Di\c{t}\u{a}'s
general method \cite{dita3} and all members of it contain a
$6\times 6$ sub-Hadamard matrix $D_6^{(1)}(a)$, see [16]:
\[D_{12}^{(7)}(a,b,c,d,e,f,g)=\left[\begin{array}{rr}
D_6^{(1)}(a) & \mathrm{Diag}(1,b,c,d,e,f)D_6^{(1)}(g)\\
D_6^{(1)}(a) & -\mathrm{Diag}(1,b,c,d,e,f)D_6^{(1)}(g)\\
\end{array}\right].\]
The family $D_{12}^{(7)}$ contains $20$ $\mathrm{ACT}$-equivalence
classes, none of which is equivalent to the previously considered $60$
classes.

The fourth family, containing $22$ $\mathrm{ACT}$-classes, of which
$21$ is distinct from all previously discussed reads

\[X_{12}^{(7)}(a,b,c,d,e,f,g)=\]\[
\left[\begin{array}{rrrrrrrrrrrr}
 1 & 1 & 1 & 1 & 1 & 1 & 1 & 1 & 1 & 1 & 1 & 1 \\
 1 & 1 & 1 & 1 & \mathbf{i} & \mathbf{i} & -1 & -1 & -1 & -1 & -\mathbf{i} & -\mathbf{i} \\
 1 & 1 & \mathbf{i} & \mathbf{i} & -\mathbf{i} & -\mathbf{i} & \mathbf{i} a & \mathbf{i} b & -\mathbf{i} b & -\mathbf{i} a & -1 & -1 \\
 1 & 1 & -1 & -1 & e & -e & a & -\mathbf{i} b & \mathbf{i} b & -a & f & -f \\
 1 & 1 & -1 & -1 & -e & e & -\mathbf{i} a & b & -b & \mathbf{i} a & -f & f \\
 1 & 1 & -\mathbf{i} & -\mathbf{i} & -1 & -1 & -a & -b & b & a & \mathbf{i} & \mathbf{i} \\
 1 & -1 & c g & -c g & g & -g & -a g\overline{e} & b g\overline{e} & -b g\overline{e} & a g\overline{e} & -\mathbf{i} f g\overline{e} & \mathbf{i} f
   g\overline{e} \\
 1 & -1 & c g & -c g & \mathbf{i} g & -\mathbf{i} g & a g\overline{e} & -b g\overline{e} & b g\overline{e} & -a g\overline{e} & -f g\overline{e} & f
   g\overline{e} \\
 1 & -1 & \mathbf{i} c g & -\mathbf{i} c g & -\mathbf{i} g & \mathbf{i} g & d g & -d g & -d g & d g & \mathbf{i} f g\overline{e} & -\mathbf{i} f g\overline{e} \\
 1 & -1 & -c g & c g & \mathbf{i} d g & \mathbf{i} d g & -\mathbf{i} d g & d g & d g & -\mathbf{i} d g & -d g & -d g \\
 1 & -1 & -c g & c g & -\mathbf{i} d g & -\mathbf{i} d g & -d g & \mathbf{i} d g & \mathbf{i} d g & -d g & d g & d g \\
 1 & -1 & -\mathbf{i} c g & \mathbf{i} c g & -g & g & \mathbf{i} d g & -\mathbf{i} d g & -\mathbf{i} d g & \mathbf{i} d g & f g\overline{e} & -f g\overline{e}
\end{array}
\right]\] The four families $H_{12B}^{(10)}$, $H_{12C}^{(8)}$,
$D_{12}^{(7)}$ and $X_{12}^{(7)}$ contain $101$
$\mathrm{ACT}$-equivalence classes. If we include the adjoint,
conjugate and transpose of these matrices , we end up with $186$
matrices, up to equivalence. This is more than half of all
inequivalent $\BH(4,12)$ matrices. The further $19$ families containing
the remaining $\BH(4,12)$ matrices along with two summarizing tables
are available in Appendix \ref{APPA}.

\begin{theorem}
Each member of the $\mathrm{ACT}$-equivalence classes of $\BH(4,12)$
matrices can be obtained from $23$ partially overlapping infinite
affine parametric family of complex Hadamard matrices. These matrices,
up to $\mathrm{ACT}$-equivalence, can be recognized by the number of
$4\times 4$ and $5\times 5$ vanishing minors they contain.
\end{theorem}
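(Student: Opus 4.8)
The plan is to combine the exhaustive classification---which furnishes a complete set of $167$ representatives of the $\mathrm{ACT}$-equivalence classes of $\BH(4,12)$ matrices---with the parametrization machinery of Section \ref{ch3}, and then to verify that the pair of vanishing-minor counts separates these representatives. Accordingly the argument splits into a \emph{covering} part and a \emph{separation} part; both are finite verifications anchored to the $167$ representatives already produced by the weak canonical augmentation search of Section \ref{ch2}.

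For the covering part, I would first record from the classification data that all but four of the $167$ representatives contain a pair of real rows $u,v$ with $u_i^2=v_i^2$ for all $i$. For each such matrix Lemma \ref{spl1} immediately embeds it into an affine one-parameter (or, upon iteration, multi-parameter) family of complex Hadamard matrices, and the specializations of the parameters to fourth roots of unity recover $\BH(4,12)$ matrices. For the four exceptional matrices I would instead compute the defect $\mathrm{d}(H)$ from the linear system \eqref{s1def}; since each turns out to have positive defect, Proposition \ref{propdef}(c) does not obstruct parametrization, and solving the exact orthogonality conditions \eqref{s1gen} yields the genuine affine families $L_{12A}^{(1)}(a)$ and $L_{12B}^{(1)}(a)$ of Appendix \ref{APPA}. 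Having a family for every class, I would then consolidate them---using the fact that a single family typically meets many classes, as tabulated---into the stated $23$ families, and confirm coverage by specializing each family at every tuple of fourth roots of unity, dephasing, and matching the resulting canonical graphs (via the \textsc{Nauty} pipeline of Section \ref{ch2}) against the canonical graphs of the $167$ representatives.

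For the separation part, I would compute, for each representative, the number of vanishing $4\times 4$ and $5\times 5$ minors. By the invariance lemma of Section \ref{preliminaries} these counts are equivalence invariants; they are moreover $\mathrm{ACT}$-invariant because transposition leaves every minor determinant unchanged and conjugation (hence the adjoint) sends each minor to its complex conjugate, so a minor vanishes for $H$ exactly when the corresponding minor vanishes for $H^\ast$, $\overline{H}$ and $H^T$. It then remains to check, by direct computation over the $167$ representatives, that the resulting pairs of counts are pairwise distinct; this is what makes the invariant a complete recogniser of the $\mathrm{ACT}$-classes.

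The main obstacle is the treatment of the four exceptional matrices. The defect only bounds the dimension of the tangent space of admissible first-order deformations---by Proposition \ref{propdef}(b) it merely overestimates the number of genuine smooth parameters---so a positive defect does not by itself guarantee an honest infinite affine family. One must therefore solve the full nonlinear system \eqref{s1gen}, not merely its linearization \eqref{s1def}, to certify that $L_{12A}^{(1)}$ and $L_{12B}^{(1)}$ are bona fide families containing the exceptional matrices as fourth-root specializations. A secondary risk is that the minor-count invariant might coincide on two distinct $\mathrm{ACT}$-classes, in which case the second assertion would fail; its injectivity on the $167$ classes is not a priori guaranteed and is precisely the content confirmed by the final computation.
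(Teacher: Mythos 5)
Your proposal is correct and follows essentially the same route as the paper: cover all but four of the $167$ representatives via the real-row-pair criterion of Lemma \ref{spl1}, handle the four exceptions by solving \eqref{s1def} together with the exact conditions \eqref{s1gen} to obtain $L_{12A}^{(1)}$ and $L_{12B}^{(1)}$, consolidate into the $23$ families of Appendix \ref{APPA}, and then verify separation by checking that the pairs of $4\times 4$ and $5\times 5$ vanishing-minor counts (which are $\mathrm{ACT}$-invariants, as you correctly argue via $\det A^T=\det A$ and $\det\overline{A}=\overline{\det A}$) are pairwise distinct across the $167$ classes, exactly as recorded in Table \ref{table3}. Your explicit caveat that a positive defect alone does not certify a genuine family, so that \eqref{s1gen} must be solved in full, is precisely the point the paper relies on for the exceptional matrices.
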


\section{The quaternary complex Hadamard matrices of order $14$}\label{ch6}

The exhaustive computer search yielded the following result:

\begin{theorem}
There are precisely $752$ $\BH(4,14)$ matrices, up to equivalence,
forming $298$ $\mathrm{ACT}$-equivalence classes.
\end{theorem}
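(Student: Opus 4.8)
The plan is to establish this purely enumerative statement by the exhaustive computer search described in Section \ref{ch2}, now run with the parameters $q=4$ and $n=14$. First I would generate the tree of dephased candidate matrices $\BH(4,m,14)$ rooted at the single equivalence class of $\BH(4,1,14)$ (all $4^{14}$ one-row matrices collapse to one dephased class), and proceed level by level, the children of a node being all orthogonality-preserving one-row extensions as in Section \ref{ch2}. At each level the tree is traversed in depth-first order using weak canonical augmentation, keeping a node $X$ only when every matrix in its ancestor sequence \eqref{x_seq} matches the corresponding term of the weak canonical sequence \eqref{w_seq}; by property \eqref{weak_parent} this guarantees that equivalent complete matrices are siblings, so exactly one representative of each equivalence class survives. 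Equivalence, and the canonical-graph order $\leq_g$ underlying $w$, are computed by mapping each $\BH(4,m,14)$ matrix to its equivalence graph and invoking \textsc{Nauty}.

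Reading off the surviving nodes at the final level $m=14$ yields the list of inequivalent complete $\BH(4,14)$ matrices, whose cardinality I expect to be the asserted $752$. The delicate point is certifying that this search is genuinely exhaustive and correctly deduplicated, since neither can be checked by hand. I would discharge this by the two independent double-counting checks of Section \ref{ch2}: recover the size of each equivalence class of $\BH(4,m,14)$ matrices from its representative and the automorphism group of its canonical graph via the orbit-stabilizer theorem, and independently recompute the number of $\BH(4,m,14)$ matrices from the number of $\BH(4,m-1,14)$ matrices together with the count of valid row-augmentations; agreement at every level $m=1,\dots,14$ (starting one level earlier than for difference matrices, at $\BH(4,1,14)$) confirms the enumeration.

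It then remains to pass from the $752$ equivalence classes to the $298$ $\mathrm{ACT}$-equivalence classes. For each representative $H$ I would form $H^\ast$, $\overline{H}$ and $H^T$ and locate each of them within the list of $752$, merging the orbits generated by these three operations. To keep the number of \textsc{Nauty} isomorphism tests manageable I would prefilter potential matches by the equivalence invariants already available: the numbers of $k\times k$ vanishing minors (the fingerprint invariant), the $\mathbb{Z}_4$-rank, and the defect $\mathrm{d}(H)$, the last being $\mathrm{ACT}$-invariant by Proposition \ref{propdef}(a); only matrices agreeing on all of these need an explicit graph-isomorphism test. Collapsing the $752$ classes under the resulting merges produces the $298$ $\mathrm{ACT}$-equivalence classes. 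The main obstacle throughout is the explosive growth of the candidate tree at $n=14$, which is precisely what forces the use of the parallelizable weak canonical augmentation in place of a breadth-first search and makes the double-counting consistency check indispensable for trusting the final counts.
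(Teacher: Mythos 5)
Your proposal follows essentially the same route as the paper: the theorem is established by the exhaustive computer search of Section~\ref{ch2} (dephased candidate matrices, weak canonical augmentation, equivalence testing via equivalence graphs and \textsc{Nauty}, validated by the orbit-stabilizer/augmentation double-counting check), with the $\mathrm{ACT}$-classes then obtained by merging each class with those of its adjoint, conjugate, and transpose. The only slight gloss is that weak canonical augmentation by itself makes equivalent matrices siblings rather than eliminating duplicates outright, so the explicit \textsc{Nauty}-based deduplication among siblings you mention is indeed a required part of the procedure, not merely an implementation detail.
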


Further analysis revealed $8$ matrices that are isolated. For these
matrices there exists a neighbourhood of the matrix which does not
contain any inequivalent complex Hadamard matrices.  We display here
one of the $8$ known isolated $\BH(4,14)$ matrices:

\[L_{14A}^{(0)}=\left[
\begin{array}{rrrrrrrrrrrrrr}
 1 & 1 & 1 & 1 & 1 & 1 & 1 & 1 & 1 & 1 & 1 & 1 & 1 & 1 \\
 1 & 1 & 1 & -1 & -1 & 1 & -\mathbf{i} & -1 & -1 & \mathbf{i} & -\mathbf{i} & -1 & \mathbf{i} & 1 \\
 1 & 1 & \mathbf{i} & \mathbf{i} & \mathbf{i} & -\mathbf{i} & 1 & -\mathbf{i} & -1 & -\mathbf{i} & -1 & 1 & -1 & -1 \\
 1 & 1 & \mathbf{i} & -\mathbf{i} & -\mathbf{i} & -\mathbf{i} & -1 & \mathbf{i} & \mathbf{i} & -1 & 1 & \mathbf{i} & -\mathbf{i} & -1 \\
 1 & 1 & -\mathbf{i} & 1 & -1 & \mathbf{i} & -1 & -\mathbf{i} & 1 & 1 & \mathbf{i} & -1 & -1 & -1 \\
 1 & \mathbf{i} & -1 & -\mathbf{i} & -1 & -1 & -\mathbf{i} & \mathbf{i} & 1 & -\mathbf{i} & -1 & 1 & \mathbf{i} & 1 \\
 1 & \mathbf{i} & -\mathbf{i} & \mathbf{i} & 1 & -1 & -1 & -\mathbf{i} & -\mathbf{i} & -1 & -\mathbf{i} & \mathbf{i} & 1 & \mathbf{i} \\
 1 & -1 & 1 & 1 & -1 & \mathbf{i} & \mathbf{i} & \mathbf{i} & -1 & -\mathbf{i} & -\mathbf{i} & -\mathbf{i} & 1 & -1 \\
 1 & -1 & 1 & -\mathbf{i} & \mathbf{i} & -1 & 1 & -1 & -\mathbf{i} & \mathbf{i} & \mathbf{i} & \mathbf{i} & -\mathbf{i} & -\mathbf{i} \\
 1 & -1 & \mathbf{i} & -1 & 1 & 1 & -1 & -1 & 1 & 1 & -1 & -\mathbf{i} & -\mathbf{i} & \mathbf{i} \\
 1 & -1 & -1 & 1 & \mathbf{i} & -\mathbf{i} & -1 & 1 & -1 & \mathbf{i} & 1 & -\mathbf{i} & -1 & 1 \\
 1 & -1 & -\mathbf{i} & -1 & -\mathbf{i} & 1 & \mathbf{i} & -\mathbf{i} & \mathbf{i} & -1 & \mathbf{i} & 1 & \mathbf{i} & -\mathbf{i} \\
 1 & -\mathbf{i} & -1 & \mathbf{i} & 1 & \mathbf{i} & 1 & \mathbf{i} & 1 & -1 & -\mathbf{i} & -1 & -1 & -\mathbf{i} \\
 1 & -\mathbf{i} & -1 & -1 & -\mathbf{i} & -1 & 1 & 1 & -1 & 1 & \mathbf{i} & -1 & 1 & \mathbf{i}
\end{array}
\right].\] The defect $\mathrm{d}\left(L_{14A}^{(0)}\right)=0$ and
hence, by Proposition \ref{propdef}, the matrix $L_{14A}^{(0)}$ is
isolated amongst all $14\times 14$ complex Hadamard matrices. Three
additional inequivalent isolated matrices can be obtained by
considering the adjoint, conjugate and transpose of $L_{14A}^{(0)}$.

\begin{theorem}
There are at least $8$ isolated $\BH(4,14)$ matrices, up to equivalence, forming
two $\mathrm{ACT}$-equivalence classes. 
\end{theorem}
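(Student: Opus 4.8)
The plan is to leverage the defect, together with its behaviour under the adjoint, conjugate and transpose operations recorded in Proposition \ref{propdef}. First I would invoke the computation already displayed above: the matrix $L_{14A}^{(0)}$ satisfies $\mathrm{d}\!\left(L_{14A}^{(0)}\right)=0$, so by Proposition \ref{propdef}(c) it is isolated amongst all $14\times 14$ complex Hadamard matrices. Because the defect is an ACT-invariant, Proposition \ref{propdef}(a) gives $\mathrm{d}\!\left(\left(L_{14A}^{(0)}\right)^\ast\right)=\mathrm{d}\!\left(\overline{L_{14A}^{(0)}}\right)=\mathrm{d}\!\left(\left(L_{14A}^{(0)}\right)^T\right)=0$ as well, so each of these three relatives is likewise isolated. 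Thus the ACT-orbit of $L_{14A}^{(0)}$ consists entirely of isolated $\BH(4,14)$ matrices.

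Next I would check that this orbit splits into exactly four inequivalent matrices. The four ACT-operations (identity, conjugation, transposition, and their composition the adjoint) form a Klein four-group, so the orbit of $L_{14A}^{(0)}$ under equivalence has size $1$, $2$ or $4$. A short argument shows it has size $4$ as soon as $L_{14A}^{(0)}$ is inequivalent to each of $\overline{L_{14A}^{(0)}}$, $\left(L_{14A}^{(0)}\right)^T$ and $\left(L_{14A}^{(0)}\right)^\ast$ separately: for instance $\overline{L_{14A}^{(0)}}\cong\left(L_{14A}^{(0)}\right)^T$ would force $L_{14A}^{(0)}\cong\left(L_{14A}^{(0)}\right)^\ast$ upon conjugating, and the remaining two coincidences are excluded the same way. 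These three inequivalences I would certify by an invariant from Section \ref{preliminaries}, most directly by comparing the canonical graphs produced by \textsc{Nauty} during the classification, or, by hand, by exhibiting distinct counts of $k\times k$ vanishing minors. This yields one ACT-equivalence class comprising four pairwise inequivalent isolated matrices.

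Finally I would repeat the argument for a second seed. The exhaustive search returns a further $\BH(4,14)$ matrix, say $L_{14B}^{(0)}$, again of defect $0$ and ACT-inequivalent to $L_{14A}^{(0)}$; by the reasoning above its ACT-orbit supplies four more pairwise inequivalent isolated matrices, disjoint from the first four. Altogether this produces $8$ inequivalent isolated $\BH(4,14)$ matrices arranged in two ACT-equivalence classes. I would phrase the bound as ``at least'' precisely because Proposition \ref{propdef}(c) certifies isolation only for matrices of defect $0$; as the remark following Proposition \ref{propdef} stresses, isolation is a global property that does not follow from the smooth-parameter count of part (b), so a $\BH(4,14)$ matrix of positive defect could conceivably be isolated too, and such matrices cannot be ruled out by this method.

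The hard part will be the two verification steps rather than any single idea: establishing $\mathrm{d}(\cdot)=0$ amounts to determining the rank of the real linear system \eqref{s1def}, a finite but sizeable calculation, while showing that the eight matrices are genuinely pairwise inequivalent (equivalently, that both ACT-orbits have full size $4$ and are disjoint) rests on the canonical-graph machinery of Section \ref{ch2}. Conceptually the only delicate point is the one just noted, namely that only the defect-$0$ certificate, and not the generic defect, guarantees isolation, which is exactly what forces the lower-bound phrasing of the statement.
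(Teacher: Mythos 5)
Your proposal is correct and follows essentially the same route as the paper: certify isolation of $L_{14A}^{(0)}$ (and, by computation, a second seed) via the defect-$0$ criterion of Proposition \ref{propdef}, use ACT-invariance of the defect to propagate isolation to the adjoint, conjugate and transpose, and verify by the classification's equivalence machinery that each ACT-orbit consists of four pairwise inequivalent matrices, giving $8$ in two ACT-classes. Your added details---the Klein four-group argument reducing the orbit-size check to three inequivalences, and the explanation of the ``at least'' phrasing---are correct refinements of what the paper leaves implicit.
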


As we have found isolated matrices it follows that it is not possible
to come up with a universal parametrization scheme for the $\BH(4,n)$
matrices. This stands in contrast to the real Hadamard matrices which
can be parametrized always (cf. Lemma \ref{spl1}).

\begin{theorem}
All $\BH(4,14)$ matrices, up to $\mathrm{ACT}$-equivalence, can be recognized by
the number of $4\times 4$ vanishing minors they contain.
\end{theorem}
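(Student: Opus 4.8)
The plan is to reduce the statement to a finite, exact computation over the already-classified list of matrices. By the enumeration theorem of this section we have explicit representatives $H_1,\dots,H_{298}$ for the $298$ $\mathrm{ACT}$-equivalence classes of $\BH(4,14)$ matrices, all available in the repository \cite{web}. To each representative I attach the integer $\nu(H_t)=\#\{(I,J):|I|=|J|=4,\ \det H_t[I,J]=0\}$, where $H_t[I,J]$ denotes the submatrix of $H_t$ on the row set $I$ and column set $J$, and the whole theorem comes down to verifying that the $298$ numbers $\nu(H_1),\dots,\nu(H_{298})$ are pairwise distinct.

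First I would check that $\nu$ is genuinely an $\mathrm{ACT}$-invariant, and not merely an equivalence invariant. That it is constant on ordinary equivalence classes is the vanishing-minor lemma of Section~\ref{preliminaries}: permuting rows or columns only permutes the index pairs $(I,J)$, while multiplying a row or column by a fourth root of unity multiplies each affected minor by a nonzero factor, so the vanishing of a minor is preserved in either case. For the three further operations one uses $\det\overline{H}[I,J]=\overline{\det H[I,J]}$, $\det H^T[I,J]=\det H[J,I]$, and $H^\ast=\overline{H}^T$; in each case the correspondence $(I,J)\mapsto(I,J)$ or $(I,J)\mapsto(J,I)$ is a bijection of index pairs under which a minor vanishes if and only if its image does. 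Hence $\nu(H)=\nu(\overline{H})=\nu(H^T)=\nu(H^\ast)$, so $\nu$ is a well-defined $\mathrm{ACT}$-class invariant.

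Next I would carry out the count in exact arithmetic. Every entry of a $\BH(4,14)$ matrix lies in $\{1,\mathbf{i},-1,-\mathbf{i}\}\subset\mathbb{Z}[\mathbf{i}]$, so every $4\times 4$ minor is a Gaussian integer; ``vanishing'' then means exactly $0$, no numerical tolerance is needed, and there is no danger of spurious or missed zeros. For each of the $298$ representatives one loops over the $\binom{14}{4}^2=1{,}002{,}001$ choices of four rows and four columns, evaluates the determinant over $\mathbb{Z}[\mathbf{i}]$, and tallies the zeros, producing the full list of values $\nu(H_1),\dots,\nu(H_{298})$.

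The substance of the theorem lies entirely in the final distinctness check, which is also the one place it can fail: there is no structural reason forcing $298$ independently computed integers to be pairwise different, and indeed for order $12$ the $4\times 4$ count alone does not separate the $167$ classes and has to be paired with the $5\times 5$ count. Thus the main obstacle is that the statement is in effect an empirical coincidence that must be certified by exhaustive comparison rather than derived from a prior principle. Once the $298$ values are confirmed to be pairwise distinct, the conclusion is immediate: two $\BH(4,14)$ matrices sharing the same $4\times 4$ vanishing-minor count must lie in the same $\mathrm{ACT}$-class by distinctness, while matrices in one $\mathrm{ACT}$-class automatically share the count by the invariance just established, so this single integer recognizes the $\mathrm{ACT}$-equivalence class completely.
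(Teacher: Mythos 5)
Your proposal is correct and follows essentially the same route as the paper: the result is a computational verification, resting on the vanishing-minor invariance lemma (extended to $\mathrm{ACT}$-equivalence via $\det\overline{H}[I,J]=\overline{\det H[I,J]}$ and $\det H^T[I,J]=\det H[J,I]$) together with an exact count of the $4\times 4$ vanishing minors for each of the $298$ classified representatives and a pairwise-distinctness check of the resulting integers, the data for which the paper relegates to the on-line tables \cite{web}. Your observation that the analogous single invariant fails at order $12$ (e.g., the pair of classes sharing the $4\times 4$ count $21118$ in Table~\ref{table3}) correctly identifies why the distinctness step is the entire mathematical content and cannot be replaced by a structural argument.
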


Although by using Theorem \ref{ch1newp} we were able to obtain various
parametric families starting from many of the $\BH(4,14)$ matrices we
constructed, we could not introduce affine parameters into matrices
having relatively small defect. Whether or not they are isolated
remains an open problem.

For an illustration of the parametrization of $\BH(4,14)$ matrices, we
display here a six-parameter family of complex Hadamard matrices:
\[D_{14}^{(6)}(a,b,c,d,e,f)=\]
\[\left[
\begin{array}{rrrrrrrrrrrrrr}
 1 & 1 & 1 & 1 & 1 & 1 & 1 & 1 & 1 & 1 & 1 & 1 & 1 & 1 \\
 1 & -1 & \mathbf{i} a & -\mathbf{i} b & \mathbf{i} f & \mathbf{i} c & -\mathbf{i} f & -\mathbf{i} & -\mathbf{i} f & -\mathbf{i} c & \mathbf{i} f & \mathbf{i} b & -\mathbf{i} a & \mathbf{i} \\
 1 & \mathbf{i}\overline{a} & -1 & \mathbf{i} f & -\mathbf{i} f & \mathbf{i} c\overline{a} & \mathbf{i} e\overline{a} & -\mathbf{i} & -\mathbf{i} e\overline{a} & -\mathbf{i} c\overline{a} & -\mathbf{i} f & \mathbf{i} f & \mathbf{i} & -\mathbf{i}\overline{a} \\
 1 & -\mathbf{i}\overline{b} & \mathbf{i}\overline{f} & -1 & \mathbf{i} d\overline{b} & -\mathbf{i}\overline{f} & \mathbf{i} e\overline{b} & \mathbf{i} & -\mathbf{i} e\overline{b} & -\mathbf{i}\overline{f} & -\mathbf{i} d\overline{b} & -\mathbf{i} & \mathbf{i}\overline{f} & \mathbf{i}\overline{b} \\
 1 & \mathbf{i}\overline{f} & -\mathbf{i}\overline{f} & \mathbf{i} b\overline{d} & -1 & \mathbf{i} c\overline{d} & -\mathbf{i} e\overline{d} & \mathbf{i} & \mathbf{i} e\overline{d} & -\mathbf{i} c\overline{d} & -\mathbf{i} & -\mathbf{i} b\overline{d} & -\mathbf{i}\overline{f} & \mathbf{i}\overline{f} \\
 1 & \mathbf{i}\overline{c} & \mathbf{i} a\overline{c} & -\mathbf{i} f & \mathbf{i} d\overline{c} & -1 & \mathbf{i} f & -\mathbf{i} & \mathbf{i} f & \mathbf{i} & -\mathbf{i} d\overline{c} & -\mathbf{i} f & -\mathbf{i} a\overline{c} & -\mathbf{i}\overline{c} \\
 1 & -\mathbf{i}\overline{f} & \mathbf{i} a\overline{e} & \mathbf{i} b\overline{e} & -\mathbf{i} d\overline{e} & \mathbf{i}\overline{f} & -1 & \mathbf{i} & -\mathbf{i} & \mathbf{i}\overline{f} & \mathbf{i} d\overline{e} & -\mathbf{i} b\overline{e} & -\mathbf{i} a\overline{e} & -\mathbf{i}\overline{f} \\
 1 & -\mathbf{i} & -\mathbf{i} & \mathbf{i} & \mathbf{i} & -\mathbf{i} & \mathbf{i} & -1 & \mathbf{i} & -\mathbf{i} & \mathbf{i} & \mathbf{i} & -\mathbf{i} & -\mathbf{i} \\
 1 & -\mathbf{i}\overline{f} & -\mathbf{i} a\overline{e} & -\mathbf{i} b\overline{e} & \mathbf{i} d\overline{e} & \mathbf{i}\overline{f} & -\mathbf{i} & \mathbf{i} & -1 & \mathbf{i}\overline{f} & -\mathbf{i} d\overline{e} & \mathbf{i} b\overline{e} & \mathbf{i} a\overline{e} & -\mathbf{i}\overline{f} \\
 1 & -\mathbf{i}\overline{c} & -\mathbf{i} a\overline{c} & -\mathbf{i} f & -\mathbf{i} d\overline{c} & \mathbf{i} & \mathbf{i} f & -\mathbf{i} & \mathbf{i} f & -1 & \mathbf{i} d\overline{c} & -\mathbf{i} f & \mathbf{i} a\overline{c} & \mathbf{i}\overline{c} \\
 1 & \mathbf{i}\overline{f} & -\mathbf{i}\overline{f} & -\mathbf{i} b\overline{d} & -\mathbf{i} & -\mathbf{i} c\overline{d} & \mathbf{i} e\overline{d} & \mathbf{i} & -\mathbf{i} e\overline{d} & \mathbf{i} c\overline{d} & -1 & \mathbf{i} b\overline{d} & -\mathbf{i}\overline{f} & \mathbf{i}\overline{f} \\
 1 & \mathbf{i}\overline{b} & \mathbf{i}\overline{f} & -\mathbf{i} & -\mathbf{i} d\overline{b} & -\mathbf{i}\overline{f} & -\mathbf{i} e\overline{b} & \mathbf{i} & \mathbf{i} e\overline{b} & -\mathbf{i}\overline{f} & \mathbf{i} d\overline{b} & -1 & \mathbf{i}\overline{f} & -\mathbf{i}\overline{b} \\
 1 & -\mathbf{i}\overline{a} & \mathbf{i} & \mathbf{i} f & -\mathbf{i} f & -\mathbf{i} c\overline{a} & -\mathbf{i} e\overline{a} & -\mathbf{i} & \mathbf{i} e\overline{a} & \mathbf{i} c\overline{a} & -\mathbf{i} f & \mathbf{i} f & -1 & \mathbf{i}\overline{a} \\
 1 & \mathbf{i} & -\mathbf{i} a & \mathbf{i} b & \mathbf{i} f & -\mathbf{i} c & -\mathbf{i} f & -\mathbf{i} & -\mathbf{i} f & \mathbf{i} c & \mathbf{i} f & -\mathbf{i} b & \mathbf{i} a & -1
\end{array}
\right].\] The starting-point matrix $D_{14}^{(6)}(1,1,1,1,1,1)$ is a
symmetric conference matrix, which was considered along with the
five-parameter subfamily
$D_{14}^{(5)}(a,b,c,d,e)=D_{14}^{(6)}(a,b,c,d,e,1)$ in
\cite{SZF1}. The extra parameter was found by the argument outlined in
Remark \ref{rem6}: we have evaluated the known five-parameter family
$D_{14}^{(6)}(a,b,c,d,e,1)$ at random fourth roots of unity to obtain
some $\BH(4,14)$ matrices, say $H_1, H_2,\hdots, H_{10}$, and
considered the corresponding $10$ instances of \eqref{s1def} featuring
the same phasing matrix $R$ in all ten cases. These extra
equations heavily constrained the matrix $R$ resulting in the
one-parametric extension we discovered. Despite its large degree of
freedom the family $D_{14}^{(6)}$ contains $14$ $\mathrm{ACT}$-classes
only, and it seems that a given affine parametric family cannot
contain significantly more different $\mathrm{ACT}$-classes in order
$14$. 

For a summarizing table highlighting the main features of $\BH(4,14)$
matrices and for additional examples of parametric families consult a
Web site \cite{web}. All results of this work are available in full
detail at the Web site which describes all inequivalent $\BH(4,n)$
matrices, parametric families and summarizing tables for $10\leq n\leq
14$.

\appendix

\section{A list of parametric families of $\BH(4,12)$ matrices}\label{APPA}

In the following we display the $19$ parametric families of order $12$
which were not shown in Section \ref{ch5}. These $19$ families with
the families $H_{12B}^{(10)}$, $H_{12C}^{(8)}$, $D_{12}^{(7)}$,
$X_{12}^{(7)}$ (defined in Section \ref{ch5}) account for all $\BH(4,12)$
matrices, up to $\mathrm{ACT}$-equivalence: \tiny
%\subsection{Family no.\ $1$:}
\[L_{12A}^{(1)}(a)=\left[
% [inline block 0: 23 envs, 59450 chars -> data_tex | \begin{array}{rrrrrrrrrrrr}  1 & 1 & 1 & 1 & 1 & 1 & 1 & 1 & 1 & 1 & 1 & 1 \\...]

\end{table}

\end{document}